\documentclass[leqno,12pt]{amsart} 

\setlength{\textwidth}{16cm} 
\setlength{\textheight}{23cm} 
\setlength{\oddsidemargin}{0cm} 
\setlength{\evensidemargin}{0cm} 
\setlength{\topmargin}{0cm}

\usepackage{amssymb}

\newtheorem{thm}{Theorem}[section]
\newtheorem{prop}[thm]{Proposition}

\theoremstyle{definition}
\newtheorem{defn}[thm]{Definition}

\theoremstyle{remark}

\numberwithin{equation}{section}

\def\C{\mathbb{C}}
\def\Z{\mathbb{Z}}
\def\R{\mathbb{R}}

\def\Z{\mathbb{Z}}
\def\E{\mathcal{E}}
\def\P{\mathbb{P}}

\def\g{\mathfrak{g}}

\def\O{\mathcal{O}}
\def\D{\mathcal{D}}
\def\F{\mathcal{F}}

\newcommand{\de}{\partial}
\newcommand{\db}{\overline{\partial}}
\newcommand{\ddb}{{\partial }\overline{\partial}}
\newcommand{\pkk}{\lq\lq$p-$K\"ahler\rq\rq}
\newcommand{\kk}{\lq\lq$1-$K\"ahler\rq\rq}
\newcommand{\nkk}{\lq\lq$(n-1)-$K\"ahler\rq\rq}

\def\H{\mathcal{H}}

\begin{document}






\title[Modifications]{Proper modifications of generalized $p-$K\"ahler  manifolds}

\author{Lucia Alessandrini} 

\address
{ Dipartimento di Matematica e Informatica\newline
Universit\`a degli Studi di Parma\newline
Parco Area delle Scienze 53/A\newline
I-43124 Parma
 Italy} \email{lucia.alessandrini@unipr.it}

\subjclass[2010]{Primary 53C55; Secondary 32J27, 32L05}


\keywords{K\"ahler manifold, $p-$K\"ahler manifold, SKT manifold, 
blow-up, modification, balanced manifold.}

\begin{abstract}
 In this paper, we consider  a proper modification $f : \tilde M \to M$ between complex manifolds, and study when a generalized $p-$K\"ahler property goes back from $M$ to $\tilde M$. When $f$ is the blow-up at a point, every generalized $p-$K\"ahler property is conserved, while when
 $f$ is the blow-up along a submanifold, the same is true for $p=1$. For $p=n-1$, we prove that the class of compact generalized balanced manifolds is closed with respect to modifications, and we show that the fundamental forms can be chosen in the expected cohomology class. We get some partial results also in the non-compact case; finally, we end the paper with some examples of  generalized $p-$K\"ahler manifolds.

\end{abstract}

\maketitle

\section{Introduction}

Let $M$  be a complex manifold of dimension $n \geq 2$, let $p$ be an integer, $1 \leq p \leq n-1$.
We shall consider three families of maps, namely:

$\pi_O : \tilde M \to M$, which is the blow-up of $M$ at a point $O$;

$\pi : \tilde M \to M$, which is the blow-up of $M$ along a submanifold $Y$;

$f : \tilde M \to M$, which is a proper modification of $M$ with center $Y$ and exceptional set $E$.

We will study, in this context, when a generalized $p-$K\"ahler property (indicated as  \pkk property, see Definition 2.3) goes back from $M$ to $\tilde M$, or what kind of weaker properties can be obtained. So we unify and generalize analogous results on hermitian symplectic, SKT, balanced, strongly Gauduchon manifolds.
\medskip

The obvious way is to start from a \lq\lq$p-$K\"ahler\rq\rq form $\Omega$ on $M$, and consider the pull-back $f^*\Omega$ on $\tilde M$, which is  \lq\lq closed\rq\rq because $f$ is holomorphic; for the same reason, we get $f^*\Omega \geq 0$, and $f^*\Omega > 0$ on $\tilde M - E$, since 
$f|_{\tilde M - E}$ is biholomorphic.

Nevertheless, strict positivity is not preserved in general: for instance, if $y \in Y,$ and $F := f^{-1}(y)$ is a fibre of dimension $k$, and 
$\omega > 0$ is the $(1,1)-$form of a K\"ahler metric on $M$, it holds 
$\int_F (f^* \omega)^k = 0$.

\medskip
The case when $M$ is K\"ahler  is well known: while $\pi$ (and $\pi_O$) preserve the K\"ahler property, so that $\tilde M$ is K\"ahler too, the famous example of Hironaka (a compact threefold $X$ which is given by a modification of $\P_3$) shows that the K\"ahler property is not preserved by modifications.

Hironaka's example $X$ is a Moishezon manifold (so that all kinds of \lq\lq $p-$K\"ahler\rq\rq proper\-ties are equivalent, except $pK$, see 7.4): this proves that it
 is \lq\lq 2-K\"ahler\rq\rq (i.e. 2K, 2WK, 2S, 2PL, see section 2)  because it is balanced (\cite{AB3}); $X$ is not 1K (nor 1WK, 1S, 1PL) since it contains a curve that bounds.
\medskip

But in general, when we perform a modification of a \kk manifold $M$, it is not guaranteed that $\tilde M$ is regular
(in the sense of Varouchas, see 7.4, i.e. a manifold satisfying the $\ddb$-Lemma). Moreover, it is well-known that if $\tilde M$ satisfies the $\ddb$-Lemma, so does $M$ (see \cite{DGMS}); but it is not known yet
 if a modification of a regular manifold is regular too: this fact sheds further light on the context of the question we stated above.
\medskip

The first result we get (Theorem 3.1) extends the very classical statement: {\it The blow-up at a point of a K\"ahler manifold is a K\"ahler manifold too.} We prove, with a unified proof, that the same holds also for hermitian symplectic, pluriclosed, SKT, balanced, strongly Gauduchon, \dots manifolds: in general, for \pkk manifolds. This result allows one to construct new examples of \pkk manifolds.

Next, in Theorem 3.2, we extend another classical result, that is: {\it If $M$ is a K\"ahler manifold, and $\tilde M$ is obtained from $M$ blowing up a submanifold, then $\tilde M$ is K\"ahler too.} We give a very short proof in the general case of \kk manifolds, which includes also pluriclosed (i.e. SKT) and hermitian symplectic manifolds. The analogous result cannot hold in the generic \pkk case, as we prove by a suitable example.
\medskip

As for compact \nkk manifolds, we complete the study of the invariance of the property  of being \lq\lq balanced\rq\rq with respect to modifications, initiated in \cite{AB6} in the classical case, and due to \cite{Po2} in the sG case: in Theorem 4.1, we prove that a modification $\tilde M$ of a compact \nkk manifold $M$, is \nkk, and in Theorem 4.3 we prove that, when $\tilde M$ is \nkk, then $M$ is \nkk too.
Next we give a partial result in case \pkk.

Here the compactness hypothesis is needed to use the characterization of \pkk 
ma\-nifolds by means of positive currents (see Theorem 2.4). 
But, owing to the use of currents, we lose the link between metrics on $M$ and $\tilde M$: we recapture the link (that is, $f_* \tilde \omega ^{n-1}$ is cohomologous to $ \omega ^{n-1}$) in Proposition 5.1; this result is proved in a more general setting in Theorem 5.2.

\medskip

We look also for another kind of generalization of our main result in \cite{AB5}, i.e., {\it A proper modification $\tilde M$ of a compact balanced manifold $M$ is balanced.} Indeed, we consider non-compact manifolds, but suppose that the center $Y$ is compact. In this case, we can consider Bott-Chern and Aeppli cohomology with compact supports, and use a modified version of the characterization theorem by positive currents; we can prove (see Theorem 6.2) that, under mild cohomological hypotheses, if $M$ is {\it locally balanced with respect to $Y$}, then $\tilde M$ is {\it locally balanced with respect to $E$}.

We end the paper in section 7 with some examples and some remarks on the \lq\lq exactness\rq\rq of the \pkk form.

We would like to thank the Referee for his valuable suggestions.
\bigskip

\section{Preliminaries}

Let $X$ be a complex manifold of dimension $n \geq 2$, let $p$ be an integer, $1 \leq p \leq n-1$; 
 we refer to \cite{HK} (see also \cite{A1}) as regards notation and terminology. 
  To define positivity for forms and currents, let us start from a complex $n-$dimensional euclidean vector space $E$, its associated euclidean vector spaces of $(p,q)-$forms  $\Lambda^{p,q}(E)$ (in particular $E^* = \Lambda^{1,0}(E)$), and a orthonormal basis $\{\varphi_1, \dots, \varphi_n \}$ for $E^*$.
 
 Let us denote $\varphi_I := \varphi_{i_1} \wedge \dots \wedge \varphi_{i_p}$, where $I = (i_1, \dots, i_p)$, $\sigma_p := i^{p^2} 2^{-p}$ and $\Lambda ^{p,p}_{\R} (E^*) := \{ \psi \in \Lambda ^{p,p} (E^*) / \psi = \overline{\psi} \}$. Let $p+k=n$.
 
 We get obviously that
 $\{ \sigma_p \varphi_I \wedge \overline{\varphi_I} , |I| = p \}$ is a orthonormal basis for $\Lambda ^{p,p}_{\R} (E^*)$, and $$dV = (\frac{i}{2}  \varphi_1 \wedge \overline{\varphi_1}) \wedge \dots \wedge (\frac{i}{2}  \varphi_n \wedge \overline{\varphi_n}) = \sigma_n \varphi_I \wedge \overline{\varphi_I} , \ I=(1, \dots , n)$$
is a volume form. 
\medskip

 \begin{defn} 
\begin{enumerate}

\item A $(n,n)-$form $\tau$ is called {\it positive} ({\it strictly positive}) if $\tau = c\ dV$ with $c \geq 0 \ ( c>0)$.

\item $\eta \in \Lambda^{p,0} (E^*)$ is called {\it simple} (or decomposable) if and only if there are $\{\psi_1, \dots, \psi_p \} \in \Lambda^{1,0}(E)$ such that $\eta = \psi_{1} \wedge \dots \wedge \psi_{p}$.

\item $\Omega \in \Lambda ^{p,p}_{\R} (E^*)$ is called {\it strongly positive} ($\Omega \in SP^p$) if and only if 
$ \Omega = \sigma_p \sum \eta_j \wedge \overline{\eta_j} ,$ with $\eta_j$ simple.

\item $\Omega \in \Lambda ^{p,p}_{\R} (E^*)$ is called {\it weakly positive} ($\Omega \in WP^p$) if and only if 
for all $\{\psi_1, \dots, \psi_m \} \in \Lambda^{1,0}(E)$, and for all $I = (i_1, \dots, i_k)$ with 
$k+p=n$,
$\Omega \wedge \sigma_k \psi_I \wedge \overline{\psi_I}$ is a positive $(n,n)-$form. It is called {\it transverse} when it is strictly weakly positive, i.e. when $\Omega \wedge \sigma_k \psi_I \wedge \overline{\psi_I}$ is a strictly positive $(n,n)-$form.
\end{enumerate}
\end{defn}
\medskip

 {\bf Remarks.}
 \medskip
 
 a) There is also an intermediate \lq\lq natural\rq\rq definition of positivity, given in terms of eigenvalues, or al follows: 
 \lq\lq$\Omega \in \Lambda ^{p,p}_{\R} (E^*)$ is positive ( $\Omega \in P^p$) if and only if for every $\eta \in \Lambda^{k,0} (E^*)$, $(k+p=n)$,
$ \Omega \wedge \sigma_k \eta \wedge \overline{\eta}$ is a positive $(n,n)-$form.\rq\rq (see \cite{HK}, Theorem 1.2). 
 \medskip
 
 b) Positive forms (as in a)) are not considered by Lelong (\cite{Le}) nor by Demailly (\cite{De}); both them call positive forms (this is the \lq\lq classical sense\rq\rq) what we call weakly positive forms. The strongly positive forms are called {\it decomposable} by Lelong.
 \medskip
 
 c) The sets $P^p, SP^p, WP^p$ and their interior parts are indeed cones; moreover, there are obvious inclusions: 
 $$ SP^p \subseteq P^p \subseteq WP^p \subseteq \Lambda ^{p,p}_{\R} , \ \ (SP^p)^{int} \subseteq (P^p)^{int} \subseteq (WP^p)^{int}.$$
 
 d) When $p=1$ or $p=n-1$, the cones coincide, since every $(1,0)-$form is simple (and hence also every $(n-1,0)-$form is simple).
 \medskip 
 
 e) In the intermediate cases, $1< p< n-1$, the inclusions are strict: indeed, if $\{\varphi_1, \dots, \varphi_4 \}$ is a basis for $\Lambda ^{1,0} (\C^4)$, then it is easy to prove that $\varphi_1 \wedge \varphi_2 + \varphi_3 \wedge \varphi_4$ is not a simple $(2,0)-$form; moreover, in $\C^n$, $(\varphi_1 \wedge \varphi_2 + \varphi_3 \wedge \varphi_4) \wedge \varphi_5 \wedge \dots \wedge \varphi_{p+2}$ is not a simple $(p,0)-$form, for $p > 2$. 
 
 By Proposition 1.5 in \cite{HK}, this implies that
 $(\varphi_1 \wedge \varphi_2 + \varphi_3 \wedge \varphi_4) \wedge (\overline{\varphi_1 \wedge \varphi_2 + \varphi_3 \wedge \varphi_4})$ is a positive $(2,2)-$form which is not strongly positive.
 
Moreover,  the authors exhibit a $(p,p)-$form which is in the interior of the cone $WP^p$, but has a negative eigenvalue, so that it does not belong to the cone $P^p$.
 \medskip
 
 f) Duality: It is not hard to prove that, for $p+k=n$:
 $$\Omega \in WP^p \iff \forall \ \Psi \in SP^k, \Omega \wedge \Psi \geq 0, \ \quad \Omega \in P^p \iff \forall \ \Psi \in P^k, \Omega \wedge \Psi \geq 0.$$
 \medskip

Let us go back to manifolds: we denote  by ${\D}^{p,p}(X)_\R$ the space of compactly supported real $(p,p)-$forms on $X$ and by ${\E}^{p,p}(X)_\R$ the space of real $(p,p)-$forms on $X$. 

Their dual spaces are: ${\D}_{p,p}'(X)_{\R}$ (also denoted by ${\D '}^{k,k}(X)_{\R}$, where $p+k=n$), the space of real currents of bidimension $(p,p)$ or bidegree $(k,k)$, which we call $(k,k)-$currents, and 
${\E}_{p,p}'(X)_{\R}$ (also denoted by ${\E '}^{k,k}(X)_{\R}$), the space of compactly supported real $(k,k)-$currents on $X$. 

We shall denote by $[Y]$ the current given by the integration on the irreducible analytic subset $Y$. 
\medskip

We shall define weakly positive, positive, strongly positive currents (see f.i. \cite{HK}). For simplicity, let $N$ be a {\it compact} $n-$dimensional manifold, and $1 \leq p \leq n-1$. 
\medskip

\begin{defn}  
\begin{enumerate}
\item $\Omega \in {\E}^{p,p}(N)_\R$ is called strongly positive (resp. positive, weakly positive, transverse or strictly weakly positive) if $\forall \ x \in N, \ \Omega_x \in SP^p (T'_xN)$ (resp. $P^p (T'_xN), \  WP^p (T'_xN),$ $(WP^p (T'_xN))^{int}$). 

These spaces of forms are denoted by
$SP^p(N), \ P^p(N),$ $ WP^p(N),$ $(WP^p(N))^{int}$. 

\item Let $T \in {\D}_{p,p}'(N)_{\R}$ be a current of bidimension $(p,p)$ on $N$. Then we have:

weakly positive currents: $T \in WP_p(N) \iff T(\Omega) \geq 0\ \ \forall \ \Omega \in SP^p(N)$. 

positive currents: $T \in P_p(N) \iff T(\Omega) \geq 0\ \ \forall \ \Omega \in P^p(N)$. 

strongly positive currents: $T \in SP_p(N) \iff T(\Omega) \geq 0\ \ \forall \ \Omega \in WP^p(N)$. 
\end{enumerate}
\end{defn}
\medskip

 {\bf Remarks.}  There are obvious inclusions between the previous cones of currents, that is, 
 $SP_p(N) \subseteq  P_p(N) \subseteq WP_p(N)$. Demailly (\cite{De}, Definition 1.3) does not consider $P_p(N)$, and indicates $WP_p(N)$ as the cone of positive currents; there are no uniformity of notation in the papers of Alessandrini and Bassanelli.
 \medskip
 
We shall need de Rham cohomology, and also Bott-Chern and  Aeppli cohomology (for which the notation is not standard, so that we recall it below): they can
be described using forms or currents of the same bidegree: 

$$H_{dR} ^{k,k}(X, \R) :=\frac{\{ \varphi \in {\E}^{k,k}(X)_\R;
d\varphi =0\}}{\{d\psi ;\psi \in {\E}^{2k-1}(X)_\R\}}\simeq\frac{\{T \in
{\D '}^{k,k}(X)_{\R}; dT =0\}}{\{dS ; S \in  {\D'}^{2k-1}(X)_{\R}
\}}$$
$$H_{\ddb}^{k,k}(X, \R) = \Lambda_\R ^{k,k}(X) = H_{BC}^{k,k}(X, \R) :=\frac{\{ \varphi \in {\E}^{k,k}(X)_\R;
d\varphi =0\}}{\{i\partial\overline{\partial}\psi ;\psi \in {\E}^{k-1,k-1}(X)_\R\}}\simeq$$
$$\simeq\frac{\{T \in
{\D '}^{k,k}(X)_{\R}; dT =0\}}{\{i\partial\overline{\partial}A ; A \in  {\D '}^{k-1,k-1}(X)_{\R}
\}}$$
$$H_{\de + \db}^{k,k}(X, \R) =V_\R ^{k,k}(X) = H_{A}^{k,k}(X, \R) :=\frac{\{ \varphi \in {\E}^{k,k}(X)_\R;
i\ddb\varphi =0\}}{\{\varphi = \de \overline\eta + \db \eta ; \eta \in {\E}^{k,k-1}(X)\}} \simeq$$
$$\simeq\frac{\{T
\in {\D '}^{k,k}(X)_{\R}; i\ddb T =0\}}{\{ \de \overline S + \db S ; S \in  {\D '}^{k,k-1}(X)
\}}.$$

\medskip
In general when the class of a current vanishes in one of the previous cohomology groups, we say that the current \lq\lq bounds\rq\rq or is \lq\lq exact\rq\rq. 
\medskip

We collect what we called in the Introduction  {\bf \lq\lq $p-$K\"ahler\rq\rq properties} in the following definition (see \cite{A1}, and also the next Remarks).
 
 \begin{defn} Let $X$ be a complex manifold of dimension $n \geq 2$, let $p$ be an integer, $1 \leq p \leq n-1$.
\begin{enumerate}

\item $X$ is a $p-$K\"ahler ({\bf pK}) manifold if it has a closed transverse  $(p,p)-$form $\Omega$. 

\item $X$ is a weakly $p-$K\"ahler ({\bf pWK}) manifold if it has a transverse $(p,p)-$form $\Omega$ with $\de \Omega = \ddb \alpha$ for some form $\alpha$.

\item $X$ is a $p-$symplectic ({\bf pS}) manifold if it has a closed transverse  real $2p-$form $\Psi$; that is, $d \Psi = 0$ and $\Omega := \Psi^{p,p}$ (the  $(p,p)-$component of $\Psi$) is transverse.

\item $X$ is a $p-$pluriclosed  ({\bf pPL}) manifold if it has a transverse $(p,p)-$form $\Omega$ with $\ddb \Omega = 0.$
\end{enumerate}
\end{defn}
\medskip

Notice that:
$pK \Longrightarrow pWK  \Longrightarrow pS  \Longrightarrow pPL;$ 
as regards examples and differences under these classes of manifolds, see \cite{A1}.

When $X$ satisfies one of these definitions, in the rest of the paper we will call it generically a {\bf \lq\lq$p-$K\"ahler\rq\rq manifold}; the form $\Omega$, called a  {\bf \pkk form}, is said to be {\lq\lq closed\rq\rq}. This may be a little bit worrying to read, but the benefit is that we do not write a lot of similar proofs.
\medskip

{\bf Remarks.} For $p=1$, a transverse form is the fundamental form of a hermitian metric, so that we can consider $1-$K\"ahler (i.e. K\"ahler), weakly $1-$K\"ahler,  $1-$symplectic, $1-$pluriclosed {\it metrics}. 
 $1-$symplectic manifolds are also called 
{\it hermitian symplectic} (\cite{ST}).

In \cite{Eg}, pluriclosed (i.e. $1-$pluriclosed) metrics are defined (see also \cite{ST}), while in  \cite{FT1} a 1PL metric (manifold) is called a {\it strong K\"ahler metric (manifold) with torsion} (SKT). 
\medskip

 For  $p = n-1$, we get a hermitian metric too, because  every transverse $(n-1,n-1)-$form $\Omega$ is in fact given by $\Omega = \omega^{n-1}$, where $\omega$ is a transverse $(1,1)-$form (see f.i. \cite{Mi}, p. 279). 
This case was studied by Michelsohn  in \cite{Mi}, where $(n-1)-$K\"ahler manifolds are called {\it balanced} manifolds. 

Moreover, $(n-1)-$symplectic manifolds are called {\it strongly Gauduchon manifolds (sG)} by Popovici (compare Definition 2.3 (3)  and Theorem 2.4 (3) with  \cite{Po1}, Definition 4.1 and Propositions 4.2 and 4.3; see also \cite{Po2}), while $(n-1)-$pluriclosed metrics are called {\it standard} or {\it Gauduchon metrics}. Recently, weakly $(n-1)-$K\"ahler manifolds have been called {\it superstrong Gauduchon, (super sG)} (\cite{PU}).
\bigskip

In the case of a {\it compact} manifold $N$, we got the following characterization (see \cite{A1}, Theorems 2.1, 2.2, 2.3, 2.4)

\begin{thm} 

\begin{enumerate}

\item Characterization of compact $p-$K\"ahler (pK) manifolds.

$N$ has a strictly weakly positive (i.e. transverse) $(p,p)-$form $\Omega$ with $\de \Omega = 0$,  if and only if $N$ has no strongly positive currents $T \neq 0$, of bidimension $(p,p)$, such that $T = \de  \overline S + \db S$ for some current $S$ of bidimension $(p,p+1)$ (i.e.  $T$  \lq\lq bounds\rq\rq in $H_{\de + \db}^{k,k}(N)$, i.e. $T$ is the $(p,p)-$component of a boundary).

\item Characterization of compact weakly  $p-$K\"ahler (pWK) manifolds.

$N$ has a strictly weakly positive $(p,p)-$form $\Omega$ with $\de \Omega = \ddb \alpha$ for some form $\alpha$,  if and only if $N$ has no strongly  positive currents $T \neq 0$, of bidimension $(p,p)$, such that $T = \de  \overline S + \db S$ for some current $S$ of bidimension $(p,p+1)$ with $\ddb S = 0$ (i.e.  $T$  is closed and \lq\lq bounds\rq\rq in $H_{\de + \db}^{k,k}(N)$). 

\item Characterization of compact $p-$symplectic (pS) manifolds.

$N$ has a real $2p-$form $\Psi = \sum_{a+b=2p} \Psi^{a,b}$, such that $d \Psi = 0$ and the 
$(p,p)-$form $\Omega := \Psi^{p,p}$ is 
strictly weakly positive,   if and only if $N$ has no strongly  positive currents $T \neq 0$, of bidimension $(p,p)$, such that $T = d S$ for some current $S$  (i.e.  $T$  is a boundary with respect to de Rham cohomology). 

\item Characterization of  compact $p-$pluriclosed (pPL) manifolds.

$N$ has a strictly weakly positive $(p,p)-$form $\Omega$ with $\ddb \Omega = 0$,  if and only if $N$ has no strongly  positive currents $T \neq 0$, of bidimension $(p,p)$, such that $T = i \ddb A$ for some current $A$ of bidimension $(p+1,p+1)$ (i.e.  $T$  \lq\lq bounds\rq\rq in $H_{\ddb}^{k,k}(N)$). 
\end{enumerate}

 \end{thm}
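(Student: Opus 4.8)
The plan is to prove all four equivalences by a single Hahn--Banach separation argument, the four cases differing only in the choice of differential operator and the associated cohomology (Aeppli for pK, de Rham for pS, Bott--Chern for pPL, and the closed--and--bounding refinement for pWK). I work on the compact manifold $N$ and pass to the dual picture of currents, following the cone--separation method of \cite{A1}.

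The easy implication --- existence of the form forbids a nonzero bounding strongly positive current --- is a direct pairing computation. Take for instance the pK case: if $\Omega$ is transverse with $d\Omega=0$, then $\de\Omega=0$ and $\db\Omega=0$ separately (they lie in distinct bidegrees). If $T\neq 0$ is strongly positive of bidimension $(p,p)$ with $T=\de\overline S+\db S$, then by the definition of $\de,\db$ on currents $T(\Omega)=\pm\overline S(\de\Omega)\pm S(\db\Omega)=0$. On the other hand, the cone duality recalled in Remark f) (see \cite{HK}) shows that a transverse form pairs strictly positively with every nonzero strongly positive current, so $T(\Omega)>0$: a contradiction. The same line handles pS (with $T=dS$ and $d\Psi=0$, using that $T$ of pure bidimension $(p,p)$ sees only $\Omega=\Psi^{p,p}$), pWK (the extra hypothesis $\ddb S=0$ matching $\de\Omega=\ddb\alpha$), and pPL (with $T=i\ddb A$ and $\ddb\Omega=0$).

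The substantive implication is the converse. Fix a background hermitian metric on $N$, which normalizes a mass on currents, and work in the real space $V:={\D '}^{k,k}(N)_\R$ of $(p,p)$-currents with its weak-$*$ topology, whose topological dual is the space $\E^{p,p}(N)_\R$ of smooth $(p,p)$-forms (for pS one instead takes the full de Rham spaces of $2p$-currents and $2p$-forms). Inside $V$ sit a convex base $B$ of the cone of strongly positive currents --- normalized against the background metric so that $B$ is weak-$*$ compact --- and the subspace $\mathcal{B}$ of bounding currents of the case at hand, e.g.\ $\{\de\overline S+\db S\}$ for pK. The hypothesis that there is no nonzero bounding strongly positive current says exactly $B\cap\mathcal{B}=\emptyset$, since $0\notin B$. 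If $\mathcal{B}$ is closed, Hahn--Banach separates the compact convex $B$ from the closed subspace $\mathcal{B}$ by a continuous functional, i.e.\ a smooth form $\Omega$ and a constant $c$ with $T(\Omega)\geq c$ on $B$ and $T(\Omega)=0$ on $\mathcal{B}$ (a linear functional bounded above on a subspace must vanish there, forcing $c>0$). Vanishing on $\mathcal{B}$ translates, via the very pairing identity used above, into the differential condition ($d\Omega=0$ for pK, etc.), while $T(\Omega)\geq c>0$ for every strongly positive unit-mass $T$ means, again by the cone duality of Remark f), that $\Omega$ is transverse. This produces the desired \pkk form.

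The main obstacle is verifying the two functional-analytic hypotheses, and above all the closedness of the space $\mathcal{B}$ of boundaries in the weak-$*$ topology. This is precisely where compactness of $N$ enters: closedness of the range of the relevant operator ($d$, $\ddb$, or $S\mapsto\de\overline S+\db S$) follows from Hodge theory and ellipticity of the associated Laplacian, and it is this step that fails for general non-compact $N$ --- which is why the compactness hypothesis cannot be dropped and why one must argue with currents rather than by a direct metric construction. Compactness of the base $B$ requires choosing the normalization (e.g.\ $T\mapsto T(\omega_0^{\,p})$ for the background form $\omega_0$) so that strong positivity yields a uniform mass bound, after which Banach--Alaoglu applies; the remaining work is the bookkeeping of signs and bidegrees in the pairing identities and the matching of each differential condition to its dual cohomology, which is routine once the separation machinery is in place.
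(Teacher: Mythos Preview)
The paper does not give its own proof of this theorem: it is quoted as background from \cite{A1} (Theorems 2.1--2.4 there), in the tradition of Sullivan \cite{Su} and Harvey--Lawson \cite{HL}. Your Hahn--Banach cone-separation argument is exactly the method used in that reference, so the proposal is correct and agrees with the cited proof; the only point worth tightening is the closedness of the boundary subspace in the pK and pWK cases, which rests on the finite-dimensionality of Aeppli and Bott--Chern cohomology on compact manifolds rather than on ordinary Hodge theory alone.
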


\medskip
{\bf Remark.} Every compact complex manifold supports Gauduchon metrics, that is, is $(n-1)$PL: in fact, by Theorem 2.4 (4), if 
$T$ is a strongly  positive $(1,1)-$current, such that $T = i \ddb A$, $A$ turns out to be a plurisubharmonic function; but $N$ is compact, so that $A$ is constant, and $T=0$.
\medskip

Lastly, let us recall a Support Theorem, which we shall frequently use for $p=n-1$.

\begin{thm} {\rm (see \cite{AB4}, Theorem 1.5)} Let $X$ be a $n-$dimensional complex manifold, $E$ a compact analytic subset of $X$; call $\{ E_j \}$ the irreducible components of $E$ of dimension $p$. Let $T$ be a weakly positive $\ddb-$closed current of bidimension $(p,p)$ on $X$ such that supp $T \subseteq E$. Then there exist $c_j \geq 0$ such that $S := T - \sum_j c_j [E_j]$ is a weakly positive $\ddb-$closed current of bidimension $(p,p)$ on $X$,  
supported on the union of the irreducible components of $E$ of dimension bigger than $p$.
\end{thm}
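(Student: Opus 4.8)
The plan is to peel off the contribution of each $p$-dimensional component of $E$ and then collapse the remainder onto the higher-dimensional strata by a dimension/support argument. Write $E=E_{<p}\cup E_{=p}\cup E_{>p}$ for the unions of the irreducible components of $E$ of complex dimension $<p$, $=p$, $>p$, so that $E_{=p}=\bigcup_j E_j$. First I would isolate $T$ over the smooth $p$-dimensional locus: set
$$\Omega := X\setminus\Big(E_{>p}\cup E_{<p}\cup\mathrm{Sing}(E_{=p})\cup\bigcup_{j\neq k}(E_j\cap E_k)\Big),$$
so that on $\Omega$ the set $E$ is a disjoint union of the smooth connected $p$-dimensional manifolds $E_j^{*}:=E_j\cap\Omega$. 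Since $T$ is weakly positive it has measure coefficients, hence is a current of order $0$ (and flat); as its support in $\Omega$ has real dimension exactly $2p$, the standard structure of an order-zero current carried by a submanifold of matching dimension gives $T|_\Omega=\sum_j h_j\,[E_j^{*}]$ with each $h_j\geq 0$ a locally finite measure on $E_j^{*}$.

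Next I would feed in the condition $\ddb T=0$. Testing it against $(p-1,p-1)$-forms supported in $\Omega$ and using that restriction to $E_j^{*}$ commutes with $\ddb$, one gets $\int_{E_j^{*}} h_j\,\ddb_{E_j^{*}}(\psi|_{E_j^{*}})=0$ for all such $\psi$, which says precisely that each $h_j$ is pluriharmonic, hence smooth and nonnegative, on the connected manifold $E_j^{*}$. To promote this to a constant I would use compactness. Because $T$ has locally finite mass, $h_j$ is $L^1_{loc}$ across the deleted set $\mathrm{Sing}(E_j)\cup(E_j\cap(E_{>p}\cup E_{<p}\cup\bigcup_{k\neq j}E_k))$, which has real codimension $\geq 2$ in $E_j$; by removable-singularity theorems a nonnegative $L^1_{loc}$ pluriharmonic function extends superharmonically across such a set, and then descends to a nonnegative superharmonic function on a resolution of the compact irreducible set $E_j$. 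The maximum principle on this compact manifold forces it to be a constant $c_j\geq 0$, so that $T=\sum_j c_j[E_j]$ on $\Omega$.

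Now set $S:=T-\sum_j c_j[E_j]$. Each $[E_j]$ is $d$-closed, so $S$ is $\ddb$-closed, and $S$ is flat as a difference of weakly positive currents. By construction $S\equiv 0$ on $\Omega$, whence $\mathrm{supp}\,S\subseteq X\setminus\Omega=E_{>p}\cup Z$, where $Z:=E_{<p}\cup\mathrm{Sing}(E_{=p})\cup\bigcup_{j\neq k}(E_j\cap E_k)$ has complex dimension $<p$, i.e. real dimension $<2p$. Restricting to the open set $U:=X\setminus E_{>p}$, the current $S|_U$ is flat of real dimension $2p$ and supported in the relatively closed set $Z\cap U$ with $\mathcal{H}^{2p}(Z\cap U)=0$; Federer's support theorem then yields $S|_U=0$. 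Hence $\mathrm{supp}\,S\subseteq E_{>p}$, the union of the components of dimension bigger than $p$, which is the asserted support statement.

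The delicate point, and the step I expect to be the main obstacle, is to verify that $S$ is itself weakly positive, i.e. that the inequality $T\geq\sum_j c_j[E_j]$ holds as currents and not merely away from the singular locus. On $\Omega$ this is automatic since $S=0$ there, but near $Z$ one must exclude a negative contribution created by the subtraction. This is exactly a $\ddb$-closed analogue of Siu's decomposition/semicontinuity inequality; I would approach it by controlling the trace measure of $T$ near $Z$ and invoking the Skoda--El Mir extension theorem to compare $T$ with $c_j[E_j]$ across the pluripolar set $Z$, the constancy of the generic density $c_j$ being what makes this comparison uniform along each $E_j$. Once the comparison is established one obtains $S\geq 0$, and combining it with the support reduction above completes the proof.
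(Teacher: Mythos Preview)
This theorem is not proved in the present paper: it is stated as Theorem~2.5 with an explicit reference to \cite{AB4}, Theorem~1.5, and is used thereafter as a black box. There is therefore no proof here against which to compare your attempt.

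On the merits of your sketch: the local identification $T|_\Omega=\sum_j h_j[E_j^{*}]$ and the pluriharmonicity of the $h_j$ are correct, and passing to constants via a resolution of $E_j$ is the right idea (though the extension you need is \emph{pluriharmonic}, not merely superharmonic, for the maximum principle to close). Two steps, however, are genuinely incomplete. First, your claim that ``$S$ is flat as a difference of weakly positive currents'' is not automatic: a difference of order-zero currents has measure coefficients, but Federer's support theorem requires control on $dS$ as well, and from $\ddb T=0$ alone you do not know that $dT$ (hence $dS$) has measure coefficients. Second --- and you flag this yourself --- the weak positivity of $S$ is the heart of the theorem, and you only gesture at a Siu/Skoda--El~Mir comparison without carrying it out. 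In \cite{AB4} these issues are handled together by developing the $\ddb$-closed analogue of Siu's decomposition; once $S\geq 0$ is established, the support reduction is immediate (a positive current of bidimension $(p,p)$ supported on a set of complex dimension $<p$ vanishes), and no separate appeal to flatness is needed.
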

\bigskip

\section{Blow-up of manifolds}\label{S3}

Let $M$  be a connected complex manifold, with $n  = dim M \geq 2$, let $p$ be an integer, $1 \leq p \leq n-1$.
As said in the Introduction, we shall consider three kinds of proper modifications:

$\pi_O : \tilde M \to M$, which is the blow-up of $M$ at a point $O$;

$\pi : \tilde M \to M$, which is the blow-up of $M$ along a compact submanifold $Y$;

and an arbitrary  proper modification of $M$ with compact center $Y$, $f : \tilde M \to M$.

Recall that a complex manifold $\tilde M$ together with a proper holomorphic map $f : \tilde M \to M$ is called a (smooth) proper modification of $M$ if there is a thin set $Y$ in $M$ such that $f^{-1}(Y)$ is thin in $\tilde M$, and the restricted map $f$ from $\tilde M - f^{-1}(Y)$ to $M-Y$ is  biholomorphic. 

Grauert and Remmert (see \cite{GR2}, pages 214-215) proved among others that $Y$ can be chosen as an analytic set of codimension $\geq 2$ such that 
$E := f^{-1}(Y)$ is an analytic set of pure codimension one in $\tilde M$, called the exceptional set of the modification.

\medskip

We will study, in this context, when a \pkk property goes back from $M$ to $\tilde M$.
The problem is completely solved for $\pi_O$ by Theorem 3.1, for which we give a proof by direct computation, that unifies all \pkk cases, some of which are well known when $p=1$.

\begin{thm} Let $\pi_O : \tilde M \to M$ be the blow-up of $M$ at a point $O$; for every $p, 1 \leq p \leq n-1$, whenever $M$ is \pkk,  $\tilde M$ is also  \pkk .
\end{thm}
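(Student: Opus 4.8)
The plan is to start from a \pkk form $\Omega$ on $M$, pull it back by $\pi_O$, and repair the loss of strict positivity along the exceptional divisor $E = \pi_O^{-1}(O) \cong \P_{n-1}$ by adding a small closed correction supported near $E$. Since $\pi_O$ is holomorphic and restricts to a biholomorphism $\tilde M - E \to M - O$, the form $\pi_O^*\Omega$ is weakly positive on $\tilde M$ and transverse on $\tilde M - E$; along $E$ the differential $d\pi_O$ has complex rank one, so $\pi_O^*\Omega$ degenerates precisely in the directions tangent to $E$ (for $p \geq 2$ it even vanishes identically on $E$). Because pulling back commutes with $d$, $\de$ and $\db$, the form $\pi_O^*\Omega$ inherits exactly the same ``closedness'' as $\Omega$: one has $d\pi_O^*\Omega = 0$ in case pK, $\de\pi_O^*\Omega = \ddb(\pi_O^*\alpha)$ in case pWK, $\pi_O^*\Psi$ is a closed real $2p$-form with $(p,p)$-part $\pi_O^*\Omega$ in case pS, and $\ddb\pi_O^*\Omega = 0$ in case pPL.

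Next I would build a single auxiliary $(p,p)$-form that fixes positivity in all four cases at once. Since the normal bundle of $E$ is $\O_E(-1)$, the line bundle $\O(-E)$ restricts to the positive bundle $\O_E(1)$ on $E$, while $\O(-E)$ is trivial on $\tilde M - E \cong M - O$. Choosing a hermitian metric on $\O(-E)$ that is flat off a relatively compact neighbourhood $U'$ of the (compact) divisor $E$, its Chern curvature is a smooth $d$-closed real $(1,1)$-form $\beta$, supported in $U'$, which is positive definite on a smaller neighbourhood $U \Subset U'$ of $E$. Then $\beta^p$ is a $d$-closed $(p,p)$-form with compact support that is transverse on $U$. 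Equivalently, one may produce $\beta$ by hand as $i\ddb$ of a cut-off of the local Fubini--Study potential $\log\|w\|^2$ pulled back by the fibration $\tilde U \to \P_{n-1}$; this is the ``direct computation'' incarnation of the same object.

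I then claim that $\pi_O^*\Omega + \varepsilon\,\beta^p$ is a \pkk form on $\tilde M$ for a suitable small $\varepsilon > 0$. Closedness holds uniformly across the four cases because $\beta^p$ is $d$-closed (hence also $\de$-closed and $\ddb$-closed) and purely of type $(p,p)$: we obtain $d(\pi_O^*\Omega + \varepsilon\beta^p) = 0$ in case pK; $\de(\pi_O^*\Omega + \varepsilon\beta^p) = \ddb(\pi_O^*\alpha)$ in case pWK, using $\de\beta^p = 0$; the real $2p$-form $\pi_O^*\Psi + \varepsilon\beta^p$ is closed and has $(p,p)$-part $\pi_O^*\Omega + \varepsilon\beta^p$ in case pS; and $\ddb(\pi_O^*\Omega + \varepsilon\beta^p) = 0$ in case pPL. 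For transversality I would split $\tilde M$ into $U$ and its complement: on $U$ the form $\beta^p$ is transverse while $\pi_O^*\Omega$ is weakly positive, and since adding an interior point of the cone $WP^p$ to an element of $WP^p$ stays in the interior, the sum is transverse there for every $\varepsilon > 0$; on $\tilde M - U$, where $\pi_O^*\Omega$ is already transverse with a margin bounded below on the compact set $\overline{U'} - U$ and equals $\Omega$ outside $U'$, the bounded perturbation $\varepsilon\beta^p$ keeps the sum in the open cone as soon as $\varepsilon$ is small enough.

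The main obstacle, and the reason the correction cannot just be a global $i\ddb$-exact term, is precisely the tension in the second step: the repair form must be strictly positive in the directions tangent to $E$, yet $\beta|_E$ is forced to represent the nontrivial positive class $c_1(\O_E(1))$, so $\beta$ can be $d$-closed but is never $\ddb$-exact near $E$. Producing a globally defined, compactly supported, $d$-closed $(1,1)$-form that is genuinely positive on $E$, so that its $p$-th power compensates the full degeneracy of $\pi_O^*\Omega$, is the one delicate point; the positivity bookkeeping on the transition region $\overline{U'} - U$ and the ensuing choice of $\varepsilon$ are then routine.
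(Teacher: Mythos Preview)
There is a genuine gap in your construction of the correction form $\beta$. The curvature of the natural metric on $\O(-E)$ --- the one you describe ``equivalently'' as $i\ddb$ of a cut-off of $\log\|w\|^2$ pulled back via the fibration $j:\tilde U\to\P_{n-1}$ --- is precisely $j^*\omega_{FS}$ (cut off), and this form is \emph{not} positive definite on any neighbourhood of $E$: it has rank $n-1$ everywhere, with kernel the tangent line to the fibre of $j$, which at points of $E$ is exactly the normal direction to $E$. Consequently $\beta^p$ is not transverse for $p\geq 2$. Concretely, at $x\in E$ take a simple $p$-vector $X = v_1\wedge\cdots\wedge v_p$ with $v_1$ normal to $E$ and $v_2,\dots,v_p$ tangent to $E$; then $\beta^p(X\wedge\overline X)=0$ (because $\beta(v_1,\overline{v_j})=0$ for all $j$) and also $(\pi_O^*\Omega)(X\wedge\overline X)=0$ (because $d\pi_O(v_2)=0$), so $\pi_O^*\Omega+\varepsilon\beta^p$ fails to be transverse at $x$ for every $\varepsilon>0$. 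Your two descriptions of $\beta$ are therefore not equivalent: an arbitrary metric on $\O(-E)$ flat off $U'$ need not have positive definite curvature on $U$, and the Fubini--Study shortcut certainly does not produce one.

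The paper confronts this exact difficulty. Writing $\tilde\theta$ for your degenerate $\beta$, it replaces $\tilde\theta^{\,p}$ by
\[
\Theta \;=\; \pi_O^*\omega \wedge \tilde\theta^{\,p-1} \;+\; \tilde\theta^{\,p},
\]
where $\omega=i\ddb\|z\|^2$ is the flat K\"ahler form on the local chart $U_{2\epsilon}\subset\C^n$. The added term $\pi_O^*\omega\wedge\tilde\theta^{\,p-1}$ supplies the missing positivity on the mixed simple $p$-vectors with one normal and $p-1$ tangential factors (and there are no others to worry about, since the normal bundle has rank one), so $\Theta>0$ on $E$. Your line-bundle approach can be salvaged in the same spirit: choose the hermitian metric on $\O(-E)$ so that its curvature near $E$ is the genuine local K\"ahler form $j^*\omega_{FS}+\pi_O^*\omega$; then $\beta$ is positive definite there and, at points of $E$, $\beta^p = \tilde\theta^{\,p}+p\,\pi_O^*\omega\wedge\tilde\theta^{\,p-1}$, which is the paper's $\Theta$ up to a harmless positive constant. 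But this choice is precisely the delicate step and must be made explicit; asserting positive definiteness without it is the gap.
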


\begin{proof} First of all, let us recall the classical proof for K\"ahler manifolds.
Let us choose coordinates $\{ z_j \}$ around $O \in M$, such that on $U_{2 \epsilon} :=\{ ||z|| < 2 \epsilon \}$ and
$\tilde U_{2 \epsilon} := \pi_O^{-1} (U_{2 \epsilon})$, $\pi_O$ is nothing but the blow-up of $\C^n$ at 0, with exceptional set $E := \pi_O^{-1}(0) \simeq \P_{n-1}$.

With obvious notation, consider a cut-off function $\chi \in C_0^{\infty} (U_{2 \epsilon})$, $\chi = 1$ on $U_{\epsilon}$, and put, for $x \in \tilde U_{2 \epsilon}$,
$$\tilde \theta_x := i \ddb (\chi (\pi_O (x)) log || x ||^2),$$ 
 where $i \ddb ( log || x ||^2)$ is just the pull-back of the Fubini-Study $(1,1)-$form on $\P_{n-1}$ under the map $j: \tilde U_{2 \epsilon}
\to \P_{n-1}$ which is the identity on $\P_{n-1}$ and maps every $x \in \tilde U_{2 \epsilon} - \P_{n-1}$ to the line $[x] \in \P_{n-1}$ that passes through $x$ (see \cite{GH}, page 186).
\medskip

The form $\tilde \theta$ turns out to be a global closed real $(1,1)-$form, with {\it supp} $\tilde \theta \subset \tilde U_{2 \epsilon}$; moreover, 
$\tilde \theta \geq 0$ on $\tilde U_{\epsilon}$.
For $x \in E$, $\tilde \theta_x > 0$ only on vectors in $T_x'E$;  $\tilde \theta$
is not strictly positive on $E$.

Nevertheless, starting from a K\"ahler form $\Omega$ on $M$, we can consider
$\pi_O^* \Omega$ which is a  closed real $(1,1)-$form on $\tilde M$, with  
$\pi_O^* \Omega \geq 0$ and $(\pi_O^* \Omega)_x (v \wedge \overline v) > 0$ when $x \in E$ and $v \in T_x' \tilde M$ is orthogonal to $T_x'E$.

Moreover, for  $x$ in the closure of  $ \tilde U_{2 \epsilon} - \tilde U_{\epsilon}$, the values of $(\pi_O^* \Omega)_x$ on positive $(1,1)-$vectors $v \wedge \overline v$ have a positive lower bound.
Hence there is a $c > 0$ such that 
$\tilde \Omega := \pi_O^* \Omega + c \tilde \theta$ is a  K\"ahler form for $\tilde M$.
\medskip

Notice that this proof (the classical one) also works for \kk manifolds; indeed, the summand $c \tilde \theta$ is $d-$closed, and hence also 
\lq\lq closed\rq\rq (see Definition 2.3).
\medskip

On the contrary, in the generic \pkk case, starting from a \pkk form $\Omega$ on $M$, $\tilde \Omega := \pi_O^* \Omega + c \tilde \theta^{p}$ is not strictly weakly positive on $E$, because when $p > 1$, in a $p-$vector $X = v_1 \wedge \dots \wedge v_p$ it is possible to have, for instance, $v_1 \in (T_x'E)^{\perp}$, $v_2, \dots , v_p  \in T_x'E$, so that both summands vanish on the strictly positive $(p,p)-$vector $X \wedge \overline X$.
\medskip

When $p > 1$, we can argue as follows. Let us consider the standard K\"ahler form on $U_{2 \epsilon}$, i.e. 
$$\omega =i \ddb ||z||^2 = \frac{i}{2} \sum_j dz_j \wedge d \overline z_j.$$
Put  $\Theta := \pi_O^* \omega \wedge \tilde \theta^{p-1} + \tilde \theta^{p}$.

{\bf Claim.} The form $ \Theta$ is a  closed real $(p,p)-$form on $\tilde M$, with {\it supp} $ \Theta \subset \tilde U_{2 \epsilon}$; moreover, 
$\Theta \geq 0$ on $\tilde U_{\epsilon}$ and $\Theta > 0$ on $E$.

This is a local construction, based only on the geometry of the blow-up $\pi_O$.
\bigskip

Now, let $\Omega$ be a \pkk form for $M$; the following claim is clear.

{\bf Claim.} $\pi_O^* \Omega$ is a  \lq\lq closed\rq\rq real $(p,p)-$form on $\tilde M$, with  
$\pi_O^* \Omega \geq 0$; moreover, for  $x$ in the closure of  $ \tilde U_{2 \epsilon} - \tilde U_{\epsilon}$ , the values of $(\pi_O^* \Omega)_x$ on positive $(p,p)-$vectors have a positive lower bound.
\medskip

Hence, there is a $c > 0$ such that 
$\tilde \Omega := \pi_O^* \Omega + c \Theta$ is a  \lq\lq closed\rq\rq transverse $(p,p)-$form on $\tilde M$, that is, a \pkk form for $\tilde M$. 
\end{proof}
\bigskip

The case 1PL (where $\tilde \Omega$ is simply $ \pi_O^* \Omega + c \tilde \theta$) was proved in \cite{FT1}, 3.1. The authors proved also, using a similar technique,  the persistence of the 1PL property for a blow-up $\pi$ along a submanifold, as in the classical 1K case (3.2 ibidem).
Let us give here a simpler proof, which includes all \lq\lq 1-K\"ahler\rq\rq cases, by using the fact that $\pi$ is a projective morphism. 
\medskip

Recall that a blow-up is a projective morphism, hence it is a {\it K\"ahler morphism} (in the sense of \cite{F}, Definition 4.1; recall also \cite{V4}, pp. 23-24); this means that there is an open covering $\{U_j\}$ of $\tilde M$, and, for every $j$, smooth functions $p_j : U_j \to \C$ such that:

$\forall y \in M,$ the restriction of  $p_j$  to $U_j \cap \pi^{-1}(y)$ is strictly plurisubharmonic , and 

$p_j - p_k$ is pluriharmonic on $U_j \cap U_k$.

This gives a {\it relative K\"ahler form} $\tilde \beta$ for $\pi$, that is,  $\tilde \beta := i \ddb p_j$ on $U_j$ gives a globally defined real closed $(1,1)-$form, strictly positive on the fibres
(but notice that the $(1,1)-$form $\tilde \beta$ may not be $\geq 0$ in all directions).

\begin{thm} Let $\pi : \tilde M \to M$ be the blow-up of $M$ along a compact submanifold $Y$; if $M$ is \kk, then $\tilde M$ is \kk too.
\end{thm}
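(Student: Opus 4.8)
The plan is to imitate the classical K\"ahler argument, replacing the Fubini--Study correction used at a point by the relative K\"ahler form $\tilde\beta$ furnished by the projectivity of $\pi$, and to check that a single computation covers all four \kk notions simultaneously. I would start from a transverse \lq\lq closed\rq\rq $(1,1)$-form $\Omega$ witnessing the \kk property of $M$ (so $\Omega$ is the fundamental form of a K\"ahler, hermitian symplectic, $1$-symplectic or SKT metric, according to which of the four conditions of Definition 2.3 we are in). Its pull-back $\pi^*\Omega$ is a real $(1,1)$-form on $\tilde M$ which is \lq\lq closed\rq\rq in the corresponding sense, satisfies $\pi^*\Omega \geq 0$, and is strictly positive on $\tilde M - E$ because $\pi$ is biholomorphic there. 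The essential degeneracy is that $d\pi$ annihilates the vertical (fibre) directions, so that at a point $x \in E$ the hermitian form $(\pi^*\Omega)_x$ is $\geq 0$ with kernel exactly the tangent space to the fibre of $\pi$ through $x$.

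Next I would introduce the correction $\tilde\Omega := \pi^*\Omega + \varepsilon\,\tilde\beta$, where $\tilde\beta$ is the relative K\"ahler form described before the statement: a globally defined real closed $(1,1)$-form that is strictly positive along the fibres of $\pi$. Since $\pi$ is an isomorphism over $M - Y$, one can moreover arrange (choosing the local potentials $p_j$ to be pluriharmonic away from $E$) that $\tilde\beta$ vanishes outside a relatively compact neighbourhood $W$ of $E$; here $W$ may be taken relatively compact precisely because $Y$, hence $E$, is compact. Because $\tilde\beta$ is $d$-closed we have $\de\tilde\beta = 0$ and $\ddb\tilde\beta = 0$, and this is what lets one line cover every case: in the pK case $\tilde\Omega$ stays $d$-closed; in the pWK case $\de\tilde\Omega = \ddb(\pi^*\alpha)$; in the pS case $\pi^*\Psi + \varepsilon\tilde\beta$ is a $d$-closed real $2$-form whose $(1,1)$-part is $\tilde\Omega$; and in the pPL case $\ddb\tilde\Omega = 0$. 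Thus $\tilde\Omega$ is \lq\lq closed\rq\rq in the required sense for every value of $\varepsilon$.

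It then remains to choose $\varepsilon > 0$ so that $\tilde\Omega$ is transverse, and this is the step I expect to be the main obstacle. The heart of it is the pointwise linear-algebra fact that if a hermitian form $A \geq 0$ has kernel $V$ and $B$ is positive definite on $V$, then $A + \varepsilon B > 0$ for all small enough $\varepsilon$; applied with $A = (\pi^*\Omega)_x$ and $B = \tilde\beta_x$ it yields strict positivity at each point of $E$. Positivity being an open condition, it persists on a neighbourhood of $E$; using the compactness of $E$ (inherited from that of $Y$) I would extract a uniform $\varepsilon_0$, and by compactness of $\overline W$ conclude that $\tilde\Omega > 0$ on all of $\overline W$ for $0 < \varepsilon \leq \varepsilon_0$. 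Outside $W$ we have $\tilde\beta = 0$ and $\tilde\Omega = \pi^*\Omega > 0$ since we are off $E$. Hence, for small $\varepsilon$, $\tilde\Omega$ is a transverse \lq\lq closed\rq\rq $(1,1)$-form, that is, a \kk form on $\tilde M$.

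The delicate points I would treat carefully are that the kernel of $\pi^*\Omega$ along $E$ is matched exactly by the fibre-positivity of $\tilde\beta$, and that the choice of $\varepsilon$ can be made uniform; both rest on the compactness of $E$, which is why the hypothesis bears on $Y$ rather than on $M$, and which is also what allows the argument to dispense with any compactness assumption on $M$ itself.
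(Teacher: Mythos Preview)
Your approach is essentially the paper's own: correct the degenerate pull-back $\pi^*\Omega$ by a scalar multiple of the relative K\"ahler form $\tilde\beta$ (the paper writes it as $\tilde\beta + C\pi^*\omega$ with $C$ large, which is your $\pi^*\Omega + \varepsilon\tilde\beta$ up to rescaling), and observe that $d\tilde\beta=0$ makes the ``closedness'' check uniform across the four cases. The only places to tighten are the justification that $\tilde\beta$ can be taken with support in a compact neighbourhood of $E$ (this is true, but comes from choosing a hermitian metric on $\mathcal{O}(-E)$ that is flat off a neighbourhood of $E$, not quite from ``making the $p_j$ pluriharmonic'') and the uniform choice of $\varepsilon$, which is cleanest argued in the form $\tilde\beta + C\pi^*\Omega>0$ since $\pi^*\Omega\geq 0$ makes the estimate monotone in $C$ over the compact set $\overline W$.
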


\begin{proof} Following \cite{F}, Lemma 4.4, choose a \kk form $\omega$ for $M$; since $Y$ is compact, there is a constant $C >0$ such that $\tilde \omega := \tilde \beta + C \pi^* \omega > 0$; since $\tilde \beta$ is $d-$closed, $\tilde \omega$ turns out to be \lq\lq closed\rq\rq. 
\end{proof}
\medskip

{\bf Remark.} Example 7.3 in section 7 proves that Theorem 3.2 cannot hold for a generic $p >1$; the case $p = n-1$ is discussed in the next section, for compact manifolds, and in section 6 for non-compact manifolds.
\bigskip

\section{Modifications of compact manifolds}

While we cannot use the previous proof in the case $p>1$, nor one similar to that of Theorem 3.1, on {\it compact} manifolds we can also solve the case $p = n-1$ for arbitrary modifications, as done in \cite{AB4}, Theorem 2.4 in case K and in \cite{Po2} in case S.

\begin{thm} Let $M, \tilde M$ be compact $n-$dimensional manifolds, let $f : \tilde M \to M$ be a modification with center $Y$ (an analytic set of codimension $\geq 2$) and exceptional set $E$, whose ($(n-1)-$dimensional) irreducible components are $\{E_j\}$. If  $M$ is \nkk, then $\tilde M$ is \nkk 
too.
\end{thm}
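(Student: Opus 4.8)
The plan is to argue by contraposition through the current characterization of Theorem 2.4, keeping the unified \lq\lq closed\rq\rq/\lq\lq bounds\rq\rq\ language so that the cases pK, pWK, pS, pPL are handled simultaneously (with $p=n-1$). Suppose $\tilde M$ is not \nkk. By Theorem 2.4 there is a non-zero strongly positive current $T$ on $\tilde M$, of bidimension $(n-1,n-1)$, which \lq\lq bounds\rq\rq\ in the cohomology appropriate to the case at hand (Aeppli for pK/pWK, de Rham for pS, Bott-Chern for pPL). The first thing I would record is that in every one of these cases the relation defining \lq\lq bounds\rq\rq\ forces $\ddb T = 0$ (from $\de^2=\db^2=0$, or directly from $T=dS$, $T=i\ddb A$): this is exactly what will let me invoke the Support Theorem.

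Next I would push $T$ forward. Since $f$ is proper and holomorphic, $f_*$ preserves bidegree, commutes with $\de$ and $\db$, and carries strongly positive currents to strongly positive currents; hence $f_* T$ is a strongly positive $(n-1,n-1)$-current on $M$ that \lq\lq bounds\rq\rq\ in the same sense (a priori it could be zero). Because $M$ \emph{is} \nkk, Theorem 2.4 applied on $M$ says no such non-zero current exists, so $f_* T = 0$. As $f$ restricts to a biholomorphism of $\tilde M - E$ onto $M - Y$, the vanishing of $f_*T$ forces $T$ to vanish on $\tilde M - E$, i.e. $\mathrm{supp}\, T \subseteq E$. Now $T$ is weakly positive (being strongly positive), $\ddb$-closed, of bidimension $(n-1,n-1)$, supported on the \emph{pure} $(n-1)$-dimensional set $E$; so the Support Theorem 2.5 applies with $p=n-1$ and yields $T = \sum_j c_j [E_j] + S$ with $c_j \geq 0$ and $S$ supported on the components of $E$ of dimension $>n-1$. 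Since $E$ has pure codimension one there are no such components, so $S=0$ and $T = \sum_j c_j [E_j]$ with the $c_j\geq 0$ not all zero.

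The whole weight of the theorem now rests on the final step: deriving a contradiction from the fact that this non-zero effective combination of exceptional components \lq\lq bounds\rq\rq. Dually, in each of the four cases \lq\lq bounding\rq\rq\ implies that $T$ annihilates every $d$-closed real $(n-1,n-1)$-form; so it would suffice to exhibit one $d$-closed real $(n-1,n-1)$-form $\psi$ with $\int_{E_j}\psi > 0$ for all $j$, since then $T(\psi)=\sum_j c_j\int_{E_j}\psi>0$ would contradict $T(\psi)=0$. I would try to build $\psi$ from the relative K\"ahler form $\tilde\beta$ of the modification (the K\"ahler-morphism structure already used for Theorem 3.2), which is $d$-closed and positive along the fibres of $f$, combined with data pulled back from $M$, so that a suitable power is closed and strictly positive along each $E_j$.

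This last point is where I expect the main obstacle, and where compactness of $M$ (and of $Y$) is indispensable. The honest difficulty is that $M$ is only \nkk\ (balanced-type), \emph{not} K\"ahler, so one cannot simply pull back a strictly positive closed $(1,1)$-form; and while $\tilde\beta$ gives positivity in the fibre directions, any form pulled back from $M$ degenerates on $E_j$, its top power restricted to $f(E_j)$ (of dimension $<n-1$) vanishing — precisely the $\int_F(f^*\omega)^k=0$ phenomenon noted in the Introduction. Reconciling these, that is, producing a genuinely positive closed test form on the exceptional components, equivalently proving the cohomological independence of the classes $[E_j]$ in the relevant Aeppli/de Rham/Bott-Chern group, is the technical heart; it is what the balanced and sG results \cite{AB4}, \cite{AB5}, \cite{Po2} establish in their respective settings and what this unified proof must reproduce.
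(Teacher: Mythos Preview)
Your reduction to $T=\sum_j c_j[E_j]$ with $c_j\geq 0$ matches the paper exactly, and your duality observation (a bounding $(1,1)$-current annihilates every $d$-closed $(n-1,n-1)$-form) is correct. The gap is in the final step: your plan to build the test form from a relative K\"ahler form $\tilde\beta$ presumes that an arbitrary proper modification is a K\"ahler morphism, and this is false. Hironaka's modification $f:X\to\P_3$ admits no relative K\"ahler form; if one existed, the very argument of Theorem~3.2 you invoke would give $\tilde\beta+Cf^*\omega>0$ for large $C$ (with $\omega$ any K\"ahler form on $\P_3$), making $X$ K\"ahler, contrary to fact. The K\"ahler-morphism structure is a special feature of blow-ups (projective morphisms) and is simply unavailable for general $f$, so the strategy cannot close.

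The paper's route is different and does not look for a test form at all. It isolates the conclusion as Proposition~4.2: for any real combination $R=\sum_j c_j[E_j]$, vanishing of its class in \emph{any} of $H_{\de+\db}^{1,1}(\tilde M)$, $H_{dR}^{1,1}(\tilde M)$, $H_{\ddb}^{1,1}(\tilde M)$ forces all $c_j=0$. The argument uses only $f_*R=0$ and regularity of holomorphic objects: from $R=\de\overline S+\db S$ one observes that $\de S$ is a $\db$-closed $(2,0)$-current, hence a genuine holomorphic $2$-form, and this smoothness lets one shuttle between currents and forms on $M$ and $\tilde M$ (together with $H_{\db}^{0,1}(\tilde M)\simeq H_{\db}^{0,1}(M)$) to upgrade through de~Rham exactness to the $\ddb$-exact case $R=i\ddb a$. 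There a local computation near a smooth point of $E_k$, where $[E_k]=ic_k\pi^{-1}\ddb\log|w_n|$ and pluriharmonic distributions are smooth, shows that $a-f^*h$ is a distribution supported on $E$ which must vanish, so $R=0$. No positivity or K\"ahler-type data on $\tilde M$ is needed at this stage.
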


\begin{proof}
Notice that every compact complex $n-$dimensional manifold is $(n-1)PL$, as we pointed out in section 2.

Let $T \geq 0$ be an \lq\lq exact\rq\rq $(1,1)-$current  on $\tilde M$, as stated in the Characterization Theorem 2.4.
Since $f_*T$ has the same properties on $M$, we get $f_*T=0$, which implies that {\it supp} $T \subseteq E$, and more precisely
$T= \sum c_j [E_j], \ c_j \geq 0$, by the Support Theorem 2.5.
Therefore $T=0$ by the following Proposition (which is  more general, since $T$ is not supposed to be positive and $M, \tilde M$ are not compact).
\end{proof}

\begin{prop} Let $M, \tilde M$ be  $n-$dimensional manifolds, let $f : \tilde M \to M$ be a proper modification with compact center $Y$ (an analytic set of codimension $\geq 2$) and exceptional set $E$, whose ($(n-1)-$dimensional) irreducible components are $\{E_j\}$.  Let $R= \sum c_j [E_j], \ c_j \in \R$; $R$ is a closed real $(1,1)-$cur\-rent on $\tilde M$.
The following statements are equivalent:

\begin{enumerate}
\item  $R$ is the component of a boundary, i.e. its class vanishes in $H_{\de + \db}^{1,1}(\tilde M)$;

\item $R$ is  a boundary, i.e. its class vanishes in $H_{dR} ^{1,1}(\tilde M, \R)$;

\item $R$ is  $\ddb$-exact, i.e. its class vanishes in $H_{\ddb}^{1,1}(\tilde M)$;

\item $c_j = 0 \ \forall j$,  i.e. $R=0$.

\end{enumerate}

\end{prop}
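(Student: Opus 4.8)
The plan is to prove the chain of implications $(2)\Rightarrow(1)$, $(2)\Rightarrow(3)$ trivially from the definitions of the cohomology groups (a de Rham boundary $dS$ is both a $\ddb$-exact representative's cousin and has a vanishing class in the coarser Bott--Chern/Aeppli quotients, so any genuine boundary automatically bounds in the weaker senses), and then to establish $(1)\Rightarrow(4)$ and $(3)\Rightarrow(4)$, after which $(4)\Rightarrow(2)$ is immediate since $R=0$ bounds in every sense. The real content is thus the passage from ``$R$ bounds in one of the weak cohomologies'' to ``all $c_j=0$''; the other direction is formal. Because the $E_j$ are the codimension-one irreducible components of $E$ and $R=\sum_j c_j[E_j]$, I want to extract, for each fixed index $k$, a test object that pairs nontrivially with $[E_k]$ but trivially with the coboundaries, forcing $c_k=0$.

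\medskip

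First I would pass to the local picture near the center. Since $Y$ is compact of codimension $\geq 2$ and $f$ is a modification, I can work in a neighborhood of $Y$ in $M$ and its preimage in $\tilde M$, where $E=f^{-1}(Y)$. The key geometric input is that each $E_j$, being a codimension-one component of the exceptional set, carries a fibre direction contracted by $f$: there are curves $C$ inside $E_j$ (fibres, or components thereof) that are contracted to points of $Y$, and whose intersection-theoretic pairing with the normal data of $E_j$ is negative. Concretely, I would use a relative K\"ahler form $\tilde\beta$ for the modification as in the discussion preceding Theorem~3.2 (the blow-up/projective-morphism case supplies such $\tilde\beta$; for a general modification one reduces to the blow-up case or uses a form strictly positive along the contracted fibres), and test $R$ against suitable $(n-1,n-1)$ cycles supported on the fibres.

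\medskip

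The cleanest route, and the one I would pursue, is the duality/intersection argument: suppose $R$ is $\ddb$-exact (statement (3)), say $R=i\ddb A$. I want to pair $R$ with a closed positive $(n-1,n-1)$-current $[C]$ coming from a fibre $C=f^{-1}(y)$ for $y\in Y$, chosen so that $C$ meets the $E_j$ with controlled multiplicities. Since $[C]$ is $d$-closed, the pairing $\int_{\tilde M} R\wedge(\text{pullback test form})$ against the coboundary vanishes by Stokes/type reasons, while $\int R\wedge(\cdot)$ computes a nonnegative (indeed, for a judicious choice, sign-definite) combination of the $c_j$. Running this over enough fibres $C$ — one family adapted to each component $E_k$ — produces a linear system in the $c_j$ whose only solution is $c_j=0$. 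The nondegeneracy of this system is exactly the statement that the classes $[E_j]$ are linearly independent in the relevant cohomology modulo the image of $f^*$, which is the contracted-cohomology input underlying \cite{AB4} and \cite{AB5}.

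\medskip

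The main obstacle I anticipate is precisely this nondegeneracy: showing that no nonzero real combination $\sum c_j[E_j]$ can bound. For the blow-up along a smooth submanifold one has an explicit description of $H^*(\tilde M)$ (the projective bundle formula) that makes the independence transparent, but for an \emph{arbitrary} modification with singular center and reducible exceptional set, the independence must be obtained abstractly — typically by the negativity of the intersection form on the exceptional fibres (a Hodge-index-type or Grauert-contractibility argument) combined with the fact that $f$ contracts $E$ to the lower-dimensional set $Y$. Handling the non-compact setting (where $Y$ is only assumed compact and $M,\tilde M$ need not be) adds the technical requirement that all test currents have compact support contained in a neighborhood of $E$, so that the pairings and Stokes arguments remain legitimate; this is why the localization near $Y$ in the first step is essential.
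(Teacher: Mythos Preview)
Your proposal has a genuine error and a genuine gap.

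\textbf{The error.} You assert that $(2)\Rightarrow(3)$ is ``trivial from the definitions''. It is not: $H_{\ddb}^{1,1}$ has the \emph{smallest} denominator (only $i\ddb$-exact forms), so the trivial implication runs the other way, $(3)\Rightarrow(2)$. A de Rham boundary $R=dS$ is \emph{not} automatically $i\ddb$-exact; passing from $d$-exactness to $\ddb$-exactness is precisely one of the nontrivial steps, and the paper devotes a separate argument to $(2)\Rightarrow(3)$ using the isomorphism $H_{\db}^{0,1}(\tilde M)\simeq H_{\db}^{0,1}(M)$ induced by the modification. Your logical scheme survives this (since $(1)\Rightarrow(4)$ plus the genuinely trivial chain $(4)\Rightarrow(3)\Rightarrow(2)\Rightarrow(1)$ would suffice), but the misstatement signals a confusion about which cohomology is finer.

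\textbf{The gap.} Your route to $(1)\Rightarrow(4)$ is to pair $R$ with fibre curves and invoke negativity of the intersection form on exceptional divisors. You yourself identify the obstacle: you need that no nonzero real combination $\sum c_j[E_j]$ is null in the relevant cohomology, and you do not prove it. For an arbitrary proper modification (singular center, reducible $E$) there is no projective-bundle formula, and ``Hodge-index / Grauert-contractibility'' is a slogan here, not an argument --- Grauert's criterion is a surface statement, and its higher-dimensional use requires exactly the kind of careful local analysis you have not supplied. Moreover, your pairing is set up between two \emph{currents} ($R$ and $[C]$); to make it rigorous you must produce smooth closed $(n-1,n-1)$-forms separating the $[E_j]$, which is again the very independence statement at issue.

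\textbf{What the paper does instead.} The paper never attempts to prove cohomological independence of the $[E_j]$ by intersection theory. It exploits $f_*R=0$ directly and analytically: for $(1)\Rightarrow(2)$ it shows $\de S$ is a holomorphic $2$-form, uses smooth representatives on $M$, and pulls back to build a genuine $d$-primitive for $R$; for $(2)\Rightarrow(3)$ it uses $H_{\db}^{0,1}(\tilde M)\simeq H_{\db}^{0,1}(M)$; for $(3)\Rightarrow(4)$ it writes $R=i\ddb a$, observes $f_*a$ is pluriharmonic, corrects $a$ by $f^*h$ to get a distribution supported on $E$, and then a local Lelong--Poincar\'e computation near a smooth point of $E_k$ forces $c_k=0$. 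This last step is the actual mechanism that replaces your unproved nondegeneracy.
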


\begin{proof} The implications $(4) \Rightarrow (3) \Rightarrow (2) \Rightarrow (1)$ are  obvious.

$(1) \Rightarrow (2)$: see Lemma 8 in \cite{AB2}, where the hypothesis is: $R$ is a closed $(1,1)-$current on $\tilde M$ such that $f_*R=0$ 
(notice that if $R= \sum c_j [E_j], \ c_j \in \R$, then $f_*R=0$, because ${\rm codim} Y \geq 2$).
We recall here the proof.

Let  $R = \de  \overline S + \db S$ for some $(1,0)-$current $S$; since $R$ is closed,  we get $\ddb S =0$.
Consider $\de S$: it is a $\db -$closed $(2,0)-$current, hence it is a holomorphic 2-form on $\tilde M$; the same holds for $\de (f_*S)$ on $M$. 

Since $\de (f_*S)$ is smooth and $\de -$exact, we can find a $(1,0)-$form $\varphi$ and a distribution $t = a+ib$ on $M$ such that
$f_*S = \varphi + \de t = \varphi + \de a + i \de b$.
\medskip

The explanation is the following (see section 2): consider the isomorphism $j$ (induced by the identity) between smooth and non-smooth (i.e. involving currents) cohomology: for instance,  $j$ maps the class $[\gamma]$ of a smooth $\de$-closed $(2,0)$-form $\gamma$ in the cohomology space $H_\de ^{2,0} (M)$, to the class $\{ \gamma \}$, in the cohomology space of currents (denoted for the moment by $K_\de ^{2,0} (M)$).
Call $\alpha$ the holomorphic 2-form $\de (f_*S)$ on $M$. Since $\de \alpha =0$, $[ \alpha ] \in H_\de ^{2,0} (M)$; but by definition, $\{ \alpha \}=0 \in 
K_\de ^{2,0} (M)$, thus $0 = [ \alpha ] \in H_\de ^{2,0} (M)$, so that $\alpha = \de \mu$ for some smooth 1-form $\mu$.
Therefore $\de (f_* S - \mu)=0$, hence $\{ f_* S - \mu \} \in 
K_\de ^{1,0} (M) \simeq H_\de ^{1,0} (M)$, that is, there is a smooth form $\nu$ such that $\{ f_* S - \mu \} = \{ \nu \}$, i.e. there is a distribution $t$ such that 
$f_* S - \mu  =  \nu + \de t$, as stated.
\medskip

Now we use $f_*R=0$ as follows:
$$0= f_*(\de  \overline S + \db S) = \   \de  (\overline \varphi + \overline {\de a} - i \overline {\de b}) + \db (\varphi + \de a + i \de b) = \de  \overline \varphi  + \db \varphi - 2i \ddb b. $$
Thus $\ddb b$ is smooth, hence also $b$ is smooth, and we can pull it back to $\tilde M$.

Define $s := S - f^*(\varphi + i \de b)$; we get
$$\db s + \de \overline s = \db (S - f^*(\varphi + i \de b)) + \de (\overline S - f^*(\overline \varphi - i \db b)) =$$
$$ \db S + \de \overline S - f^*(\db \varphi - i \ddb b) -  f^*( \de \overline \varphi - i \ddb b))=$$
$$ R - f^*(\db \varphi +  \de \overline \varphi - 2i \ddb b) = R;$$
moreover 
$$\de s = \de S - \de f^*\varphi = \de S -  f^*(\de (f_* S));$$
both summands are holomorphic 2-forms on $\tilde M$, and they coincide outside the exceptional set $E$: therefore they coincide, hence $\de s = 0$. Thus $R= d(s + \overline s)$ is a boundary.
\medskip

$(2) \Rightarrow (3)$: Let $R= dQ = \de \overline S + \db S$, for a real 1-current $Q = Q^{0,1} + Q^{1,0} =\overline S + S$, where $S$ is a $\de -$closed $(1,0)-$current. As before, $0 = f_*R = d(f_*Q)$, so that we can choose a smooth representative of the cohomology class of the $d-$closed 1-current $f_*Q$ on $M$; that is, $f_*Q = \varphi + da$, where $\varphi$ is a smooth closed 1-form and $a$ is a distribution on $M$.

Let $q := Q -f^*\varphi$; it holds $d q = d Q -f^*d \varphi = d Q = R$ and, as regards the $(0,1)-$part,  

$f_* q^{0,1} = f_*(Q^{0,1} -f^*\varphi^{0,1})  = f_*Q^{0,1} - \varphi^{0,1} = \db a.$

Since $R$ is a $(1,1)-$current, $\db q^{0,1} =0$, so it represents a class in $H_{\db}^{0,1} (\tilde M) \simeq 
H_{\db}^{0,1} (M)$ (a classical result), but this class vanishes in $M$, because $f_* q^{0,1} = \db a$; thus it vanishes in $\tilde M$, i.e. $q^{0,1} = \db b$.

Hence 
$$R = d q = \de q^{0,1}  + \db \overline{q^{0,1}}  = \ddb (b - \overline b).$$

$(3) \Rightarrow (4)$: 
Suppose $R = i \ddb a$: since $f_* R =  0$, $f_*a$ is pluriharmonic on $M$ (there is a smooth pluriharmonic function $h$ such that $f_* a = h$ a.e.). Hence $f^* h = h \circ f$ is pluriharmonic on $\tilde M$, so that $R= i \ddb (a - f^* h)$, where the distribution $a - f^* h$ is supported on $E$, because $f|_{\tilde M - E}$ is a biholomorphism. 

Let $x$ be a smooth point $x \in E$ (as a matter of fact, $x \in E_k$ for some $k$); choose a neighborhood $U$ of $x$ with coordinates $\{w_j\}$ such that, in $U$, $R = c_k[E_k] = i c_k\pi^{-1} \ddb log ||w_n||;$
thus in $U$ the distribution 
$$  i c_k\pi^{-1} log ||w_n|| - (a - f^* h)$$
is pluriharmonic, hence smooth. This implies that 
$a - f^* h$, which is a distribution supported on $E$, vanishes in $U$. We conclude in this manner that $R= \sum c_j [E_j] =  i \ddb (a - f^* h) = 0$.
\end{proof}

\medskip

More than that, we can prove that the class of compact \nkk manifolds is closed with respect to modifications.

\begin{thm} Let $M, \tilde M$ be compact $n-$dimensional manifolds, let $f : \tilde M \to M$ be a modification. If  $\tilde M$ is \nkk, then $M$ is \nkk too.
\end{thm}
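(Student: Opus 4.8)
The plan is to argue by contraposition and to transfer a forbidden current from $M$ to $\tilde M$ by pull-back, invoking the Characterization Theorem 2.4 (both manifolds being compact). So suppose $M$ is \emph{not} $(n-1)$-K\"ahler, in whichever of the four senses pK, pWK, pS, pPL is at issue. By the corresponding part of Theorem 2.4 there is a strongly positive current $T\neq0$ of bidimension $(n-1,n-1)$, that is, a positive $(1,1)$-current (the three cones coinciding for $p=n-1$ by Remark (d)), which is exact in the relevant cohomology: $T=\de\overline S+\db S$ in the pK case, the same with $T$ moreover closed in the pWK case, $T=dS$ in the pS case, and $T=i\ddb A$ in the pPL case.

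My first step is to notice that in every case $T$ is $\ddb$-closed. This is immediate for pS and pWK (where $T$ is $d$-closed) and for pPL (where $\ddb T=i\ddb\ddb A=0$); in the pK case one computes $\de\db(\de\overline S+\db S)=\de\db\de\overline S=-\de\de\db\overline S=0$. Hence in all four cases $T$ is a nonzero strongly positive $\ddb$-closed $(1,1)$-current on $M$, and this is exactly the class of currents for which the Alessandrini--Bassanelli pull-back $f^{*}T$ is available: it is again a strongly positive $\ddb$-closed $(1,1)$-current on $\tilde M$.

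I would then verify that $\tilde T:=f^{*}T$ is nonzero and still exact, so as to contradict the hypothesis on $\tilde M$. Nonvanishing follows from $f_{*}f^{*}T=T\neq0$, since $f$ is biholomorphic off the thin center $Y$ and exceptional set $E$. Exactness is inherited because the pull-back of currents is compatible with the pull-back of (smooth) cohomology classes, so that the class of $\tilde T$ equals $f^{*}$ of the class of $T$, which is $0$ by construction; concretely $[\tilde T]=0$ in $H_{\de+\db}^{1,1}(\tilde M)$ in the pK and pWK cases (with $\tilde T$ still closed in the pWK case), in $H_{dR}^{1,1}(\tilde M,\R)$ in the pS case, and in $H_{\ddb}^{1,1}(\tilde M)$ in the pPL case. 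Thus $\tilde T$ is a nonzero strongly positive current on $\tilde M$ carrying precisely the exactness forbidden by Theorem 2.4 for an $(n-1)$-K\"ahler manifold, the desired contradiction.

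The technical heart, and the step I expect to be the main obstacle, is to make the pull-back rigorous and to show it preserves both nonvanishing/positivity and exactness in each of the three cohomologies. For pPL this is transparent, because the primitive $A$ is a distribution (a current of bidimension $(n,n)$) and $f^{*}A=A\circ f$ yields $f^{*}T=i\ddb(A\circ f)$ at once; the closed cases pWK and pS reduce to the classical pull-back of closed positive currents and its compatibility with $H_{dR}$ and $H_{\de+\db}$. The genuinely delicate case is pK, where $T$ is only $\ddb$-closed and its potential $S$ is a $(1,0)$-current, so that one must rely on the Alessandrini--Bassanelli theory of pull-backs of plurisubharmonic (i.e. $\ddb$-closed positive) currents under modifications, together with the functoriality $[f^{*}T]=f^{*}[T]$ relating that pull-back to the pull-back of smooth forms.
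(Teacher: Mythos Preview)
Your proposal is correct and follows the same route as the paper: both argue via the characterization Theorem~2.4 and transfer the offending positive $(1,1)$-current from $M$ to $\tilde M$ using the Alessandrini--Bassanelli pull-back (stated in the paper as Theorem~4.4, from \cite{AB6}), which preserves positivity, $\ddb$-closedness (and $d$-closedness when present), the relation $f_*\tilde T=T$, and the vanishing of the relevant cohomology class. The paper writes out only the pWK case explicitly (citing \cite{AB6} for pK and \cite{Po2} for pS) and observes that the $(n-1)$PL case is vacuous since every compact manifold is $(n-1)$PL---a shortcut you could have taken instead of pulling back the plurisubharmonic potential.
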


\begin{proof}
The case $(n-1)PL$ is obvious. The case $(n-1)K$ is proved in \cite{AB6}, the case $(n-1)S$ is proved by Popovici in \cite{Po2}; the proofs are similar, nevertheless, as the author says in the Introduction, the arguments are considerably simplified by the fact that one can handle \lq\lq pull-back\rq\rq of $d-$closed positive $(1,1)-$currents by their local potentials.
Let us consider here the WK-case, to complete the proof of the Theorem.
\medskip

Take a $(1,1)-$current $T \geq 0$ on $M$, such that $dT=0$ and $T = \de  \overline S + \db S$. Consider the following result:

\begin{thm} {\rm (Theorem 3 in \cite{AB6})} Let $M, \tilde M$ be complex manifolds, and let $f : \tilde M \to M$ be a proper modification. Let $T$ be a positive $\ddb-$closed $(1,1)-$current on $M$. Then there is a unique positive $\ddb-$closed $(1,1)-$current $\tilde T$ on $\tilde M$ such that $f_* \tilde T = T$ and $\tilde T \in f^* \{T\} \in H_{\de + \db}^{1,1}(\tilde M, \R).$ 
\end{thm}

 Looking carefully through the details of the proof (see also Theorem 3.9 and Proposition 3.10 in \cite{AB5}), it is not hard to notice that, when $T$ is $d-$closed, $\tilde T$ becomes $d-$closed too (in the estimates, this is the \lq\lq classical case\rq\rq).
 
 Thus, in our situation, $\tilde T$ is a closed positive  $(1,1)-$current  on $\tilde M$ such that  $\tilde T \in f^* \{T\} = 0 \in H_{\de + \db}^{1,1}(\tilde M, \R):$
 this means that $\tilde T = \de  \overline s + \db s = 0$, since $\tilde M$ is \nkk. 
 
 Therefore $T = f_* \tilde T = 0.$
 \end{proof}

\medskip

Example 7.3 shows that similar results cannot hold for a generic $p$, also when  the exceptional set is supposed to be $pK$ as the manifold $M$, and the modification is simply a blow-up. Hence, to study when  a generalized $p-$K\"ahler property goes back from $M$ to $\tilde M$, we must add some hypothesis on $E$, as in the following result.

Since we shall use only here forms and currents on a (singular) analytic subset (that is, the exceptional set $E$), we refer to \cite{B}, pp. 575-577 for definitions and details; here, for a $(p,p)-$form $\tilde \Omega$ on $\tilde M$, we indicate by $i_E^* \tilde \Omega > 0$ the fact that, for every positive current $t$ on $E$, it holds $(( i_E)_* t,\tilde \Omega) > 0$.
\medskip

\begin{prop} Let $M, \tilde M$ be compact $n-$dimensional manifolds, let $f : \tilde M \to M$ be a modification with center $Y$ and exceptional set $E$ (call $i_E : E \to \tilde M$ the inclusion); let $1 \leq p <n-1$ and suppose  $M$ is \pkk. If there is a $(p,p)-$form $\tilde \Omega$ on $\tilde M$ such that $i_E^* \tilde \Omega > 0$ and $f_*(d \tilde \Omega)$  (or $f_*(i \ddb \tilde \Omega)$ in case PL) is a smooth form,
then $\tilde M$ is \pkk too.
\end{prop}

\begin{proof}
Let $T \geq 0, T \neq 0$, be an \lq\lq exact\rq\rq current of bidimension $(p,p)$ on $\tilde M$.
Since $f_*T$ has the same properties on $M$, we get $f_*T=0$, which implies that {\it supp} $T \subseteq E$. By Theorem 1.24 in \cite{B}, there is a current $t$ on $E$ such that $T = ( i_E)_* t$;
thus $(T,\tilde \Omega) = (( i_E)_* t,\tilde \Omega) = (t, i_E^* \tilde \Omega) > 0.$

Arguing as in Proposition 4.2, since $f_*(d \tilde \Omega)$ is smooth and exact, we have a $(p,p)-$form $\Psi$ on $M$ such that $f_*(d \tilde \Omega) = d \Psi$; moreover, $f^* (d \Psi) = f^* (f_*(d \tilde \Omega)) = d \tilde \Omega$, since they are smooth forms, which coincide on $\tilde M - E$.
Therefore, when $T=dS$, 
$$(T,\tilde \Omega) = (dS,\tilde \Omega) = (S,d \tilde \Omega) =(S,f^* (d \Psi)) = (dS,f^*  \Psi) = (f_* T , \Psi) = 0.$$
When $T = \de  \overline S + \db S$, the proof is similar, since by dimensional reasons, 
$(\de  \overline S + \db S,\tilde \Omega) =(d(S + \overline S),\tilde \Omega)$. 

In the pPL case, we have only to replace the operator $d$ by the operator $\ddb$.
\end{proof}

\bigskip

\section{Link between \pkk forms on $M$ and $\tilde M$}

Notice that, using currents, in Theorem 4.1 we lose the connection between metrics on $M$ and $\tilde M$: nevertheless, we can prove the following link:

\begin{prop} Let $M, \tilde M$ be compact $n-$dimensional manifolds, let $f : \tilde M \to M$ be a modification. For every \nkk metric $h$ with form $\omega$ on $M$, there is an \nkk metric $\tilde h$ with form $\tilde \omega$ on $\tilde M$ such that $\omega^{n-1}$ and $f_* \tilde \omega^{n-1}$ are in the same (relevant) cohomology class.
\end{prop}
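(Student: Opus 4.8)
The plan is to separate the linear-algebra content from the cohomological bookkeeping. Since $p=n-1$, the Remark following Definition 2.3 (Michelsohn) says that every transverse $(n-1,n-1)$-form equals $\omega^{n-1}$ for a unique transverse $(1,1)$-form $\omega$; so an \nkk metric $\tilde\omega$ on $\tilde M$ is the same datum as a \lq\lq closed\rq\rq\ transverse $(n-1,n-1)$-form $\tilde\Omega=\tilde\omega^{n-1}$, the word \lq\lq closed\rq\rq\ read in the four senses of Definition 2.3. It therefore suffices to exhibit such a $\tilde\Omega$ on $\tilde M$ with $f_*\tilde\Omega$ in the relevant cohomology class of $\omega^{n-1}$ on $M$; Michelsohn then returns the metric $\tilde\omega$.

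First I would set $\tilde\Omega:=f^*(\omega^{n-1})+\Theta$. The pull-back $f^*(\omega^{n-1})$ is \lq\lq closed\rq\rq\ in the same sense as $\omega^{n-1}$, since $f^*$ commutes with $\de$ and $\db$; it is weakly positive everywhere and transverse on $\tilde M-E$, its only defect being the collapse of positivity along the fibres over $E$. The cohomological half of the statement is then free: because $f$ is a modification, hence of degree one, the projection formula gives $f_*(f^*(\omega^{n-1}))=\omega^{n-1}$ \emph{exactly}. Everything is thus reduced to choosing a correction $\Theta$ that (i) makes $\tilde\Omega$ transverse on all of $\tilde M$ and (ii) has $f_*\Theta$ trivial in the relevant cohomology of $M$.

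The unifying device is to take $\Theta$ to be $\ddb$-exact, $\Theta=\ddb\beta$. Such a form is $d$-closed (indeed $\ddb\beta=d(\db\beta)$), hence also $\de$-closed modulo $\ddb$ and $\ddb$-closed, so adding it to $f^*(\omega^{n-1})$ — or to the pull-back of the closed $2(n-1)$-form in the $pS$ case — preserves \lq\lq closedness\rq\rq\ in all four senses simultaneously. For (ii), $f_*(\ddb\beta)=\ddb(f_*\beta)$ is again $\ddb$-exact, hence trivial in Bott–Chern, in Aeppli and in de Rham cohomology at once. When $f$ is the blow-up of a point or of a submanifold, a correction of the required shape is provided by the local constructions of Theorems 3.1 and 3.2: the Fubini–Study contributions are $i\ddb u=d(\tfrac{i}{2}(\db-\de)u)$, and their products stay $\ddb$-exact, e.g. $(i\ddb u)^{k}=\ddb\!\big(u\,(i\ddb u)^{k-1}\big)$ and $i\ddb v\wedge(i\ddb u)^{k}=\ddb\!\big(v\,(i\ddb u)^{k}\big)$; cut off near $E$, they are globally $\ddb$-exact, restore transversality along the collapsed fibres, and leave positivity off $E$ to $f^*(\omega^{n-1})$.

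The main obstacle is condition (i) for an \emph{arbitrary} modification $f$, when $E$ is not the exceptional divisor of a single blow-up: one must produce a globally $\ddb$-exact $(n-1,n-1)$-form, supported near $E$, that is strictly positive exactly on the directions along which $f^*\omega$ degenerates. This is a local-to-global positivity problem along $E$, and it is precisely the content of the more general Theorem 5.2, of which Proposition 5.1 is the special case where the \lq\lq closed\rq\rq\ form on the compact base is the genuine metric power $\omega^{n-1}$. Since a single $\ddb$-exact correction serves all four \nkk flavours, no case analysis is needed beyond reading \lq\lq closed\rq\rq\ in the appropriate sense, and the conclusion that $f_*\tilde\omega^{n-1}=\omega^{n-1}+\ddb(f_*\beta)$ is cohomologous to $\omega^{n-1}$ follows in each of them.
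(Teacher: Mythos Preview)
Your reduction of Proposition 5.1 to Theorem 5.2 is the paper's own route, and the Michelsohn step and the projection formula $f_*f^*(\omega^{n-1})=\omega^{n-1}$ are handled correctly. Two points, however, are not in order.

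First, Theorem 5.2 has as a \emph{standing hypothesis} that $\tilde M$ is already \pkk\ with some form $\tilde\Omega$; its proof literally builds the correction out of this $\tilde\Omega$ (one writes $f_*\tilde\Omega$ locally as $i\ddb R$ or $\de\overline S+\db S$ near $Y$, cuts off, and pulls back). For $p=n-1$ this hypothesis is supplied by Theorem 4.1, which you never invoke. Without it your appeal to Theorem 5.2 is circular: you are asking Theorem 5.2 to manufacture exactly the datum it assumes.

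Second, the assertion that Theorem 5.2 produces a \emph{globally $\ddb$-exact} correction, so that a single $\Theta=\ddb\beta$ serves all four flavours, misreads its proof. In cases WK, S, PL the form $\tilde\Omega$ furnished by Theorem 4.1 is only $\ddb$-closed, not $d$-closed; the correction $\Theta'$ built in Theorem 5.2 coincides with $\tilde\Omega$ on $f^{-1}(W)$ and is therefore not $d$-closed there, hence certainly not $\ddb$-exact on $\tilde M$. Accordingly the paper states two different conclusions: $\{f_*\Gamma\}=\{\Omega\}$ in $H_{\ddb}^{p,p}(M)$ in case K, but only in $H_{\de+\db}^{p,p}(M)$ in cases WK, S, PL. Your uniform conclusion $f_*\tilde\omega^{\,n-1}=\omega^{n-1}+\ddb(f_*\beta)$ is thus stronger than what is proved, and your explicit $\ddb$-exact constructions cover only blow-ups, not an arbitrary modification. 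The fix is simply to drop the $\ddb$-exact claim, quote Theorem 4.1 to get an \nkk\ form $\tilde\Omega$ on $\tilde M$, and then apply Theorem 5.2 with the case distinction on the relevant cohomology exactly as the paper does.
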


In case K, this is Corollary 4.9 in \cite{AB5}; we consider here  a more general context, namely, that of \pkk manifolds with $p > {\rm dim} Y$, non necessarily compact.

\begin{thm} Let  $f : \tilde M \to M$ be a proper modification with a compact center $Y$ and exceptional set $E$. Suppose  $\tilde M$ and $M$ are \pkk manifolds, with $p > dim Y$, having \pkk forms $\tilde \Omega$ and $\Omega$. Then there is a \pkk form $\Gamma$ on $\tilde M$ such that $f_* \Gamma$ is \lq\lq cohomologous\rq\rq to $\Omega$.
\end{thm}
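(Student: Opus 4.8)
The plan is to produce $\Gamma$ in the form $\Gamma := f^*\Omega + c\,\Theta$, for a suitable constant $c>0$ and a correction $\Theta$ localized near $E$. Since $f$ is a modification, $f^*\Omega$ is a \emph{smooth} \lq\lq closed\rq\rq\ $(p,p)$-form on $\tilde M$ with $f^*\Omega\geq 0$, and it is transverse on $\tilde M - E$ (where $f$ is biholomorphic); it degenerates, along $E$, only in the fibre directions of $f$. So I would take $\Theta\geq 0$ to be a \lq\lq closed\rq\rq\ $(p,p)$-form supported in a neighbourhood $U$ of $E$ and strictly positive in the vertical directions, exactly as the explicit $\Theta=\pi_O^*\omega\wedge\tilde\theta^{\,p-1}+\tilde\theta^{\,p}$ of Theorem 3.1 does in the point blow-up. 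For $c$ large this makes $f^*\Omega+c\,\Theta$ transverse on all of $\tilde M$, so that $\Gamma$ is a \pkk\ form on $\tilde M$.

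It then remains to check that $f_*\Gamma$ is \lq\lq cohomologous\rq\rq\ to $\Omega$. Because $f$ has degree one, $f_*f^*\Omega=\Omega$, hence $f_*\Gamma=\Omega+c\,f_*\Theta$ and the claim reduces to showing that $f_*\Theta$ \lq\lq bounds\rq\rq. Here is where the hypothesis $p>\dim Y$ enters: since $\mathrm{supp}\,f_*\Theta\subset f(U)$, a neighbourhood of $Y$ that may be taken to retract onto $Y$, the class of the closed current $f_*\Theta$ factors through the degree-$2p$ cohomology of $Y$, which vanishes because $2p>\dim_\R Y$. To get this uniformly across the four notions pK, pWK, pS, pPL, I would arrange $\Theta$ to be \emph{globally} $\ddb$-exact with localized potential, $\Theta=i\ddb\Xi$ with $\mathrm{supp}\,\Xi\subset U$; this is what the model computation gives, since $i\ddb(u\,\beta)=\tilde\theta\wedge\beta$ for $\tilde\theta=i\ddb u$ and $\beta$ closed, so that $\tilde\theta^{\,p}=i\ddb(u\,\tilde\theta^{\,p-1})$ and $\pi_O^*\omega\wedge\tilde\theta^{\,p-1}=i\ddb(w\,\tilde\theta^{\,p-1})$ are both $\ddb$-exact with compactly supported potentials near $E$. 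Then $f_*\Theta=i\ddb(f_*\Xi)$ is $\ddb$-exact on $M$, whence its class vanishes simultaneously in Bott--Chern, Aeppli and de Rham cohomology, and $f_*\Gamma\sim\Omega$ in the relevant cohomology in each case.

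The main obstacle is the construction of $\Theta$ for an \emph{arbitrary} modification rather than a blow-up: one must produce a single \lq\lq closed\rq\rq\ form, supported near $E$, that is positive in every vertical direction \emph{and} carries a localized $\ddb$-potential, and one must verify that the resulting $\Gamma$ is genuinely transverse on the (possibly singular) exceptional set $E$. I expect to obtain $\Theta$ either by dominating $f$ by a finite composition of blow-ups along smooth centres (Hironaka) and transporting the explicit corrections of Theorem 3.1, or directly from a relative K\"ahler form for the K\"ahler morphism $f$ together with the Lelong--Poincar\'e equation for a defining section of $E$ and a cut-off, as in \cite{AB5}. The delicate points are precisely the vertical-positivity estimate that turns $f^*\Omega+c\,\Theta$ into a transverse form, and the control of the singularities of $Y$ and $E$ in the localization/exactness step, where the degree bound $2p>\dim_\R Y$ coming from $p>\dim Y$ is essential.
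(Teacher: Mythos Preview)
Your overall ansatz $\Gamma = f^*\Omega + c\,\Theta$ with $\Theta$ supported near $E$ and null-cohomologous is exactly the paper's strategy, and your identification of where the hypothesis $p>\dim Y$ enters (cohomological vanishing on neighbourhoods of $Y$) is correct. But the gap you yourself flag---the construction of $\Theta$ for an \emph{arbitrary} modification---is the entire content of the proof, and your proposed routes do not close it: a relative K\"ahler form is a $(1,1)$-form positive on fibres, not a $(p,p)$-form with the required transversality, and tracking the explicit corrections of Theorem~3.1 through a tower of blow-ups does not obviously produce a single form on the original $\tilde M$ with a localized $\ddb$-potential and positivity in \emph{all} vertical directions of the composite exceptional set.

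The paper's key idea, which your plan misses entirely, is to build $\Theta$ out of the \emph{given} \pkk\ form $\tilde\Omega$ on $\tilde M$. One pushes $\tilde\Omega$ forward to $M$; since $p>\dim Y$, a suitable neighbourhood $U$ of $Y$ satisfies $H_{\ddb}^{p,p}(U)=0$ (case pK) or $H_{\de+\db}^{p,p}(U)=0$ (cases pWK, pS, pPL), so $f_*\tilde\Omega$ acquires a potential $R$ (resp.\ $S$) on $U$. On $U-Y$, where $f_*\tilde\Omega$ is smooth, one also has a smooth potential; comparing the two and applying a cut-off $\chi$ yields a current $D$ (resp.\ $F$) on $M$ such that $i\ddb D$ (resp.\ $\de\bar F+\db F$) is smooth on $M-Y$, supported in $U$, and equals $f_*\tilde\Omega$ on a smaller neighbourhood $W$ of $Y$. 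Pulling this back by the biholomorphism $f|_{\tilde M-E}$ gives a form that coincides with $\tilde\Omega$ on $f^{-1}(W)-E$, hence extends across $E$ to a form $\Theta$ equal to $\tilde\Omega$---and therefore transverse---on all of $f^{-1}(W)$. Then $\Gamma=f^*\Omega+c\,\Theta$ is transverse for $c>0$ small (not large: $\Theta$ is not required to be $\geq 0$ on the annulus $f^{-1}(U)-f^{-1}(W)$, where $f^*\Omega$ alone is already transverse), and $f_*\Gamma-\Omega=i\ddb(cD)$ (resp.\ $\de(\overline{cF})+\db(cF)$) is exact by construction. In short: you do not need to manufacture vertical positivity near $E$ from the geometry of $f$; the hypothesis that $\tilde M$ is already \pkk\ hands it to you.
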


\medskip
Here $f_* \Gamma$ is \lq\lq cohomologous\rq\rq to $\Omega$  means:
$\{f_* \Gamma\} = \{\Omega\} \in H_{\ddb}^{p,p}(M)$ in case K, $\{f_* \Gamma\} = \{\Omega\} \in H_{\de + \db}^{p,p}(M)$ in cases WK, S, PL. 
The case $pK$, with $M$ and $\tilde M$ compact manifolds, is proved  in \cite{AB5}, Theorem 4.8; as regards the case $(n-1)S$, see Theorem 1.2 in 
\cite{X}; we will prove here the general case. 
 \medskip
 
\begin{proof}  Our goal is to get, as in Theorem 3.1, a positive constant $c$ such that $\Gamma := f^* \Omega  + c \Theta$ is the required form, where $\Theta$ is null-cohomologous and is obtained by changing $f_* \tilde \Omega$.
\medskip

Let us recall the following classical result (see f.i. \cite{V2} p. 251):

{\bf Remark.} Let $Y$ be a $s-$dimensional compact analytic subset of $M$; $Y$ has a fundamental system of neighborhoods $\{U\}$ such that $H_{dR}^q (U, \R) =0$ for $q > 2s$, and, for every coherent sheaf $\F$, $H^q(U, \F) =0$ for $q > s$.
\medskip

In \cite{ABL} we studied the case of 1-convex manifolds, where the cohomology groups $H^q(U, \F)$ are finite dimensional when $q > 0$. We proved there the following result:
\medskip

\begin{thm} {\rm (\cite{ABL}, Theorem 2.4)} Let $M$ be a complex manifold, and let $\O^k$ be the sheaf of germs of holomorphic $k-$forms on $M$. Suppose $dim H^j(M, \O^k) < \infty \ \forall k \geq 0, \ \forall j \geq s.$
Then the cohomology groups $H_{\ddb}^{p,p}(M)$  and $H_{\de + \db}^{p,p}(M)$ are Hausdorff topological vector spaces for every $p \geq s$.
\end{thm}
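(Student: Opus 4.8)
The plan is to reduce the Hausdorff property of each of the two cohomology groups to the closedness of a single coboundary operator, and then to extract that closedness from the finiteness of the Dolbeault groups. I equip every $\E^{p,q}(M)$ with its natural Fréchet topology (uniform convergence of all derivatives on compact sets); then $\de,\db$ and $\ddb$ are continuous, each space of cocycles is closed, and $H_{\ddb}^{p,p}(M)$ is Hausdorff exactly when $\ddb(\E^{p-1,p-1})$ is closed in the space of $d$-closed $(p,p)$-forms, while $H_{\de+\db}^{p,p}(M)$ is Hausdorff exactly when $\de(\E^{p-1,p})+\db(\E^{p,p-1})$ is closed in $\ker(\ddb)\cap\E^{p,p}$.

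The basic principle I would invoke is that a continuous linear map of Fréchet spaces whose image has finite codimension in a closed target is automatically a topological homomorphism with closed image (open mapping theorem); equivalently, a complex of Fréchet spaces with continuous differentials has Hausdorff cohomology wherever that cohomology is finite-dimensional, and there the incoming differential has closed range. Feeding Dolbeault's isomorphism $H_{\db}^{p,q}(M)\cong H^q(M,\O^p)$ and the hypothesis $\dim H^q(M,\O^p)<\infty$ for $q\geq s$ into this principle, I obtain that $\db\colon\E^{p,q-1}\to\E^{p,q}$ has closed range whenever $q\geq s$, and, by conjugation, that $\de\colon\E^{q-1,p}\to\E^{q,p}$ has closed range whenever $q\geq s$. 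For a fixed bidegree $(p,p)$ with $p\geq s$ this gives closed range simultaneously for $\db\colon\E^{p,p-1}\to\E^{p,p}$, $\de\colon\E^{p-1,p}\to\E^{p,p}$, $\db\colon\E^{p-1,p-1}\to\E^{p-1,p}$ and $\de\colon\E^{p-1,p-1}\to\E^{p,p-1}$.

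It remains to pass from these four individual statements to the two coboundary spaces that actually govern $H_{\ddb}^{p,p}$ and $H_{\de+\db}^{p,p}$. Here I would run a diagram chase in the double complex $(\E^{\bullet,\bullet},\de,\db)$ --- or, more efficiently, the Varouchas-type exact sequences linking Bott--Chern and Aeppli cohomology to the Dolbeault and $\de$-cohomologies --- so as to realise each of $H_{\ddb}^{p,p}$ and $H_{\de+\db}^{p,p}$ as an iterated extension of subquotients of Dolbeault groups living in bidegrees $(a,b)$ with $a\geq s$ or $b\geq s$. Because all such groups are finite-dimensional, the finite-codimension criterion then forces $\ddb(\E^{p-1,p-1})$ to be closed in $\ker d\cap\E^{p,p}$ and $\de(\E^{p-1,p})+\db(\E^{p,p-1})$ to be closed in $\ker(\ddb)\cap\E^{p,p}$, which is what is wanted. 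As a shortcut for part of the work I would use the Serre-type duality between the Fréchet complex of forms and the (DF) complex of currents, under which the two descriptions of each group recalled in Section 2 are exchanged and Hausdorffness is preserved.

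The hard part is exactly this last passage. The difficulty is that $\ddb=\de\db$ is a composition of closed-range operators and $\de(\E^{p-1,p})+\db(\E^{p,p-1})$ is a sum of closed subspaces, and in a Fréchet space neither construction need be closed; moreover, since $M$ is not compact, the groups $H_{\ddb}^{p,p}$ and $H_{\de+\db}^{p,p}$ themselves may be infinite-dimensional, so the finite-codimension lemma cannot be applied to them head-on. The entire argument therefore rests on arranging the bookkeeping in the double complex so that only Dolbeault groups of degree $\geq s$ intervene --- this is precisely where the constraint $p\geq s$ is consumed, and where one has to verify that the a priori uncontrolled groups of degree $p-1$, such as $H^{p-1}(M,\O^p)$, enter only through maps whose ranges have already been certified closed.
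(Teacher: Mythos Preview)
This theorem is not proved in the paper; it is quoted from \cite{ABL}, and the only information the paper gives is that the argument there ``is based on an accurate analysis of exact sequences of sheaves and cohomology groups''. The illustration offered for the adapted Claim (the step $s=0$, $p=1$) uses the sheaf $\H$ of real pluriharmonic functions, its fine resolution by the complex $\E^{0,0}_\R \to (\E^{1,0}\oplus\E^{0,1})_\R \to \cdots$, and the short exact sequence $0\to\R\to\O\to\H\to0$, so as to identify $H_{\ddb}^{1,1}$ with $H^{1}(M,\H)$ and then squeeze the latter between $H^{1}(M,\O)$ and $H^{2}(M,\R)$. Your proposal is in the same spirit: the Varouchas-type sequences and the double-complex bookkeeping you mention are the smooth-form avatars of exactly this sheaf machinery, and the principle ``finite-dimensional cohomology of a Fr\'echet complex $\Rightarrow$ closed range of the incoming differential'' is the correct functional-analytic ingredient.

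The gap is precisely where you place it. You correctly observe that closedness of $\ddb(\E^{p-1,p-1})$ and of $\de(\E^{p-1,p})+\db(\E^{p,p-1})$ does not follow formally from closed range of the four first-order operators you list, and you then promise a chase that realises each of $H_{\ddb}^{p,p}$ and $H_{\de+\db}^{p,p}$ as an iterated extension of Dolbeault subquotients in controlled bidegrees. But you do not carry out that chase; the last paragraph is an honest description of the obstacle (the possibly infinite-dimensional $H^{p-1}(M,\O^{p})$ at $p=s$) rather than its resolution. In the sheaf-theoretic route indicated by the paper this is handled by climbing a ladder of short exact sequences of sheaves (of the type $0\to\R\to\O\to\H\to0$ and its higher-degree analogues $0\to\O^{k}\to\F\to\F'\to0$), each shifting the cohomological degree by one, and checking at every rung that the connecting maps in the associated long exact sequences are topological homomorphisms. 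Until you either reproduce that ladder explicitly or supply an alternative argument for the two closedness statements, what you have is a correct diagnosis of the difficulty together with a plausible strategy, not a proof.
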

\medskip

Adapting its proof, which is based on an accurate analysis of exact sequences of sheaves and cohomology groups, we get in our situation (where the cohomology groups vanish):
\medskip

{\bf Claim.} Let $Y$ be a $s-$dimensional compact analytic subset of $M$; $Y$ has a fundamental system of neighborhoods $\{U\}$ such that $H_{dR}^q (U, \R) =0$ for $q > 2s$, and, for every coherent sheaf $\F$, $H^q(U, \F) =0$ for $q > s$.
Thus $Y$ has a fundamental system of neighborhoods $\{U\}$ such that $H_{\ddb}^{p,p}(U) = 0, \ H_{\de + \db}^{p,p}(U) =0$ for $p > s$. 
\bigskip

To give a hint of the first step ($s=0, p=1$) of the proof of this Claim, let us consider $\H$, the sheaf of germs of real pluriharmonic functions, with the following well-known exact sequences of sheaves (see \cite{ABL}, p. 260):
$$ 0 \to \R \to \O \to \H \to 0$$
and
$$0 \to \H \to  {\E}^{0,0}_\R \to ({\E}^{1,0} \oplus {\E}^{0,1})_\R \to \dots .$$
From the second one we can compute $H_{\ddb}^{1,1}(U)$, so that $H_{\ddb}^{1,1}(U) \simeq H^1(U, \H)$.

From the first one, we get 
$$ \dots \to H^1(U, \O) \to H^1(U, \H) \to H^2(U, \R) \to \dots .$$
Thus, by the previous Remark,we get $H_{\ddb}^{1,1}(U) \simeq H^1(U, \H) = 0$.
\bigskip

Let us turn back to the proof of Theorem 5.2; choose $U$ as in the previous Claim, and consider $f_* \tilde \Omega$. While in case $pK$, $f_* \tilde \Omega$ is closed, so that by 
$H_{\ddb}^{p,p}(U) = 0$ we get $f_* \tilde \Omega = i \ddb R$ on $U$, in the other cases it holds $\ddb f_* \tilde \Omega =0$, so that thanks to 
$H_{\de + \db}^{p,p}(U) =0$ we get $f_* \tilde \Omega = \de \overline S + \db S$, for some $(p, p-1)-$current $S$ on $U$.
\medskip

Recall that cohomology classes can be represented by currents or by forms (see also the proof of Proposition 4.2): thus, since $f_* \tilde \Omega$ is smooth on $M-Y$, we get on $U-Y$:

a) $f_* \tilde \Omega = i \ddb \alpha$ for some real $(p-1,p-1)-$form $\alpha$ on $U-Y$ in case pK, and

b) $f_* \tilde \Omega = \de \overline \beta + \db \beta$ for some $(p,p-1)-$form $\beta$ on $U-Y$ in cases pWK, pS and pPL.
\medskip

{\bf Claim.} In the previous notation, on $U-Y$ we get, respectively:

a) $i \ddb (R- \alpha) =0$, thus $R- \alpha = \gamma + \de \overline C + \db C$, where $\gamma$ is a real $\ddb-$closed form and $C$ is a $(p-1,p-2)-$current; when $p=1$, $R- \alpha = \gamma $ are smooth functions;

b) $\de \overline{(S - \beta)} + \db (S- \beta) =0$, thus $S - \beta = \gamma + \de A + \db B$, where $\gamma$ is a $(p,p-1)-$form such that $\de \overline \gamma + \db \gamma =0$, $A$ is a real $(p-1,p-1)-$current, $B$ is a $(p,p-2)-$current.

c) when $p=1$, $\de \overline{(S - \beta)} + \db (S- \beta) =0$, thus $S - \beta = \gamma + \alpha + \de h$, where $\gamma$ is a $(1,0)-$form such that $\de \overline \gamma + \db \gamma =0$, $\alpha$ is a holomorphic 1-form, $h$ is a real distribution.
\medskip

{\it Proof of the Claim.} In case a), $\gamma$ is a smooth representative of the class $\{ R- \alpha\} \in H_{\de + \db}^{p-1,p-1}(U-Y)$; when $p=1$, 
$R- \alpha$ itself is smooth.
\medskip

In case b), for $p > 1$, the proof is more involved: we can use exact sequences of sheaves and their cohomology groups as done in \cite{ABL} (see the proof of Proposition 2.2 there). In particular, let us consider
$$ \dots  (\E^{p,p-2} \oplus \E^{p-1,p-1} \oplus \E^{p-2,p})_{\R} \to (\E^{p,p-1} \oplus \E^{p-1,p})_{\R} \to \E^{p,p}_{\R} \to \E^{p+1,p+1}_{\R}  \dots$$
where the maps are, respectively,

$\sigma_{2p-2}(\zeta, \eta, \overline \zeta) = (\db \zeta + \de \eta, \db \eta + \de \overline \zeta), \ \ \sigma_{2p-1}(\varphi, \overline \varphi) = (\db \varphi + \ \de \overline \varphi), \ \ \sigma_{2p} = i \ddb$.
\medskip

Notice that $H_{\de + \db}^{p-1,p-1}(U-Y)$ is given by $\frac{Ker \sigma_{2p} }{Im \sigma_{2p-1}}$ on $U-Y$, but here we need $\frac{Ker \sigma_{2p-1} }{Im \sigma_{2p-2}}$ on $U-Y$.

Since
 $\de \overline{(S - \beta)} + \db (S- \beta) =0$, i.e. 
$\sigma_{2p-1}(S - \beta, \overline{S - \beta}) = 0$, it represents a class in  $\frac{Ker \sigma_{2p-1} }{Im \sigma_{2p-2}}$ on $U-Y$.
Choose a smooth representative of this class: this means precisely $S - \beta = \gamma + \de A + \db B$, as stated in the Claim. 
\medskip

In case c), when $p = 1$, the exact sequence of sheaves is the following
$$ \dots  ( \O^1 \oplus \E^{0,0} \oplus \overline \O^1)_{\R} \to (\E^{1,0} \oplus \E^{0,1})_{\R} \to \E^{1,1}_{\R} \to \E^{2,2}_{\R}  \dots$$
where the maps are, respectively,

$\sigma_{0}(\alpha, h, \overline \alpha) = (\alpha + \de h, \db h + \overline \alpha), \ \ \sigma_{1}(\varphi, \overline \varphi) = (\db \varphi + \ \de \overline \varphi), \ \ \sigma_{2} = i \ddb$.
\medskip

Since
 $\de \overline{(S - \beta)} + \db (S- \beta) =0$, i.e. 
$\sigma_{1}(S - \beta, \overline{S - \beta}) = 0$, it represents a class in  $\frac{Ker \sigma_{1} }{Im \sigma_{0}}$ on $U-Y$.
Choose a smooth representative of this class: this means precisely $S - \beta = \gamma + \alpha + \de h$, as stated in the Claim. 
\bigskip

Going back to the proof of Theorem 5.2, choose a neighborhood  $W \subset \subset U$ of $Y$, and take a cut-off function $\chi \in C_0^{\infty} (U)$, $\chi = 1$ on $W$. Define:

$D := \chi (\alpha + \gamma) + \de (\chi \overline C) + \db (\chi C)$, 

$F := \chi (\beta + \gamma) + \de (\chi A) + \db (\chi B)$ when $p>1$, and 

$F := \chi (\beta + \gamma + \alpha) + \de (\chi h)$ when $p=1$.

$D$ and $F$ are currents on $M-Y$; moreover, it is easy to check that $i \ddb D$ and $\de \overline F + \db F$ are smooth on $M-Y$, so that we can pull them back to $\tilde M - E$ (let us denote by $g$ the restriction of $f$ to $\tilde M - E$). Thus 
$\Theta := g^* (i \ddb D)$ and $\Theta ' := g^* (\de \overline F + \db F)$ are, respectively, $(p,p)-$forms on $\tilde M - E$, which coincide with $\tilde \Omega$ on $f^{-1} (W) - E$.

So they extend to the whole of $\tilde M$: remark that they are supported on $f^{-1}(U)$ and transverse on $f^{-1}(W)$.

Thus we can pick $c > 0$ such that $\Gamma := f^* \Omega + c \Theta$ or $\Gamma ' := f^* \Omega + c \Theta '$ are transverse forms on $\tilde M$.
$\Gamma$ and $\Gamma '$ are \lq\lq closed\rq\rq because $\Omega$ is  \lq\lq closed\rq\rq, and 

a) $\Theta$ is $i \ddb-$exact on $\tilde M - E$ and coincides with $\tilde \Omega$ (which is \lq\lq closed\rq\rq) near $E$;

b) $\Theta '$ is $(\de + \db)-$exact on $\tilde M - E$ and coincides with $\tilde \Omega$ (which is \lq\lq closed\rq\rq) near $E$.

Moreover, in the first case $f_* \Gamma - \Omega = i \ddb (cD)$, and in the other case 
$f_* \Gamma ' - \Omega ' = \de (\overline{cF}) + \db (cF)$. 
\end{proof}
\bigskip

\section{Currents in the non-compact case}

In the non-compact case, we cannot use the classical characterization of K\"ahler ma\-nifolds by currents, which has been introduced by Sullivan \cite{Su} 
and by Harvey and Lawson \cite{HL}; hence we have no results about a generic modification. Nevertheless, in \cite{ABL} we studied 1-convex manifolds (which are not compact but have a specific compact \lq\lq soul\rq\rq) by positive currents. This tecnique can be used to get a partial result on proper modifications.
Thus we consider the following definition, where $M$ is a complex $n-$dimensional manifold.

\begin{defn} Let $Y$ be a compact analytic subset of $M$; $M$ is said {\it locally \pkk} {\it with respect to $Y$} if every neighborhood $U$ of $Y$, $U \subset \subset M,$ is \pkk, in the sense that there is a real closed $(p,p)-$form $\Omega$ on $M$ such that $\Omega >0$ on the compact set  $\overline U$.
\end{defn}
\medskip

Our aim is to prove:

\begin{thm} Let $M, \tilde M$ be  $n-$dimensional manifolds, let $f : \tilde M \to M$ be a proper modification with compact center $Y$ and exceptional set $E$ (whose $(n-1)-$dimensional irreducible components are $\{E_j\}$). Suppose $dim H^j(\tilde M, \O^r) < \infty \ \forall r \geq 0, \ \forall j \geq 1.$
If  $M$ is locally $(n-1)$K with respect to $Y$, then $\tilde M$ is locally $(n-1)$K with respect to $E$.
\end{thm}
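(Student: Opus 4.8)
The plan is to transpose the current-theoretic argument of Theorem 4.1 to the non-compact, locally defined situation, replacing the compact characterization Theorem 2.4(1) by a version adapted to \emph{compactly supported} Bott-Chern and Aeppli cohomology (the \lq\lq modified characterization theorem\rq\rq\ announced for Theorem 6.2). Fixing a relatively compact neighborhood $V$ of $E$, the goal is to produce a real closed $(n-1,n-1)$-form $\tilde\Omega$ on $\tilde M$ which is transverse on $\overline V$. I would first record the dual statement: such a form exists provided there is no strongly positive $(1,1)$-current $T\neq 0$, compactly supported in a neighborhood of $E$, which is \lq\lq exact\rq\rq, i.e. the $(1,1)$-component of a boundary, $T=\de\overline S+\db S$. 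The Hahn-Banach separation underlying this equivalence requires the space of such coboundaries to be closed, and this is exactly where the standing hypothesis $\dim H^j(\tilde M,\O^r)<\infty$ for all $r\geq 0$, $j\geq 1$ enters: by Theorem 5.3 it makes $H_{\ddb}^{p,p}(\tilde M)$ and $H_{\de+\db}^{p,p}(\tilde M)$ Hausdorff for $p=n-1$, and dually the compactly supported cohomology in the complementary bidegree, which legitimizes the separation.

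Granting the characterization, I would argue by contradiction. Suppose $\tilde M$ is not locally \nkk\ with respect to $E$; then there is a strongly positive exact $(1,1)$-current $T\neq 0$ supported in a relatively compact neighborhood of $E$. Pushing forward, $f_*T=\de\,\overline{f_*S}+\db\,f_*S$ is a strongly positive, compactly supported, exact $(1,1)$-current on $M$, with support in the compact neighborhood $f(\mathrm{supp}\,T)$ of $Y$. Since $M$ is locally \nkk\ with respect to $Y$, choose a closed $(n-1,n-1)$-form $\Omega$ on $M$ transverse on a relatively compact neighborhood $U\supseteq f(\mathrm{supp}\,T)$ of $Y$. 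Then $f_*T(\Omega)=0$, because $f_*T$ is exact and $\Omega$ is closed (by dimensional reasons this pairing reduces to a pairing against $d\Omega=0$), while $f_*T\geq 0$ is strongly positive and $\Omega$ is strictly positive on $\mathrm{supp}(f_*T)$; hence $f_*T=0$. As in Theorem 4.1 this forces $\mathrm{supp}\,T\subseteq E$, because $f$ is biholomorphic off $E$ and $\mathrm{codim}\,Y\geq 2$. As $T$ is strongly (hence weakly) positive and $\ddb$-closed, the Support Theorem 2.5 yields $T=\sum_j c_j[E_j]$ with $c_j\geq 0$, there being no components of $E$ of dimension $>n-1$. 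Finally Proposition 4.2, which does not assume compactness of $M,\tilde M$, shows that an exact current of the form $\sum_j c_j[E_j]$ must vanish, so $T=0$, a contradiction; therefore the desired form exists.

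The hard part will be the first step, namely the passage from the nonexistence of obstruction currents to the existence of the transverse form in the non-compact, compactly supported framework. In Theorem 2.4 compactness guaranteed that the cone of strongly positive currents and the subspace of coboundaries could be separated by a continuous functional represented by a \emph{smooth} form; here one must instead work with Bott-Chern and Aeppli cohomology with compact supports and verify that the coboundary subspace has closed range. This is precisely the function of the finite-dimensionality hypothesis through Theorem 5.3, which supplies the Hausdorff (closed-range) property. Once this functional-analytic set-up is secured, the geometric steps (push-forward, the Support Theorem, and Proposition 4.2) go through without further difficulty, exactly as in the compact case.
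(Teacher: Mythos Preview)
Your proposal is correct and follows the same overall scheme as the paper: set up the compactly supported characterization (Theorem 6.4, justified by the closed-range statement Proposition 6.3 under the finite-dimensionality hypothesis), push forward a hypothetical bad current $T$, kill $f_*T$ by pairing against the locally defined closed transverse form on $M$, localize $T$ to $E$ via the Support Theorem, and then conclude $T=\sum c_j[E_j]=0$.

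The one genuine difference is in the last step. You invoke Proposition 4.2 (valid since it does not require $M,\tilde M$ compact) to pass from ``$\sum c_j[E_j]$ is the component of a boundary'' to ``all $c_j=0$''. The paper instead exploits the extra information that $S$ has \emph{compact} support: from $T=\de\overline S+\db S$ and $dT=0$ one gets $\ddb S=0$, so $\de S$ is $\db$-closed of bidegree $(2,0)$, hence a holomorphic $2$-form with compact support, hence $\de S=0$; thus $T=d(S+\overline S)$ is actually $d$-exact with compactly supported primitive, and one finishes by the homology fact that no $E_j$ is null-homologous. Your route is perfectly valid and arguably cleaner, since Proposition 4.2 is already proved in full generality; the paper's route trades that citation for a short direct argument specific to the compact-support setting. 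A minor bibliographic point: the closed-range input you need is Proposition 6.3 (Corollary 2.5 of \cite{ABL}), feeding into Theorem 6.4, rather than Theorem 5.3 per se, though the two come from the same source and the distinction is cosmetic.
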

\medskip

Let us recall the notation and some results proved in \cite{ABL}, which we shall use in the proof. For every manifold $X$, $P^p_c(X)$ denotes the closed convex cone of positive currents of bidimension $(p,p)$ and compact support, while $B^p_c(X)$ is the space of currents of bidimension $(p,p)$ and compact support, which are $(p,p)-$components of a compactly supported boundary current, that is, the class of the current vanishes in 
$$H_{\de + \db}^{k,k}(X, \R)_c = \frac{\{T
\in {\E}_{p,p}'(X)_{\R}; i\ddb T =0\}}{\{ \de \overline S + \db S ; S \in  {\E}_{p,p+1}'(X)\}}.$$
 
\medskip

In the non-compact case, it is not guaranteed that the operators we need are topological homomorphisms (so that the orthogonal space to the Kernel coincides with the Image), but to get this fact it suffices to require a mild cohomological condition, as stated in the next result:

\begin{prop} {\rm (Corollary 2.5 in \cite{ABL})} Let $X$ be a complex manifold such that 

$dim H^j(X, \O^r) < \infty \ \forall r \geq 0, \ \forall j \geq 1.$ 
Then $d_p := d : \E^{p,p}_{\R}(X) \to (\E^{p+1,p} \oplus \E^{p,p+1})_{\R}(X)$  and $\ddb_p := \ddb : \E^{p-1,p-1}_{\R}(X) \to \E^{p,p}_{\R}(X)$
are topological homomorphisms for every $p \geq 1$.
\end{prop}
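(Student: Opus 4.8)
The plan is to recast both assertions as closed-range statements and to extract them from the Hausdorffness supplied by Theorem 5.3. Each space $\E^{a,b}(X)$, in its $C^\infty$ topology, is a nuclear Fréchet space, and $d,\de,\db,\ddb$ act on these spaces as continuous linear maps. For a continuous linear map $u\colon A\to B$ of Fréchet spaces the open mapping theorem yields the basic equivalence: $u$ is a topological homomorphism if and only if $\operatorname{im}u$ is closed in $B$. Hence, for a three-term complex $A\xrightarrow{u}B\xrightarrow{v}C$ of Fréchet spaces, the cohomology $\ker v/\operatorname{im}u$ with the quotient topology is Hausdorff if and only if $\operatorname{im}u$ is closed in $\ker v$; since $\ker v$ is closed in $B$ and $\operatorname{im}u\subseteq\ker v$, this is the same as $\operatorname{im}u$ being closed in $B$, i.e. as $u$ being a topological homomorphism. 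Throughout I take $s=1$ in Theorem 5.3, so that $H_{\ddb}^{p,p}(X)$ and $H_{\de+\db}^{p,p}(X)$ are Hausdorff for all $p\geq 1$.

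For $\ddb_p$ the conclusion is immediate. It is the incoming differential of the Bott--Chern complex $\E^{p-1,p-1}_\R\xrightarrow{\ddb_p}\E^{p,p}_\R\xrightarrow{d_p}(\E^{p+1,p}\oplus\E^{p,p+1})_\R$, whose middle cohomology is precisely $H_{\ddb}^{p,p}(X)$. Since $\operatorname{im}\ddb_p\subseteq\ker d_p$ and $\ker d_p$ is closed in $\E^{p,p}_\R$, the equivalence above together with the Hausdorffness of $H_{\ddb}^{p,p}(X)$ from Theorem 5.3 shows that $\operatorname{im}\ddb_p$ is closed, hence that $\ddb_p$ is a topological homomorphism.

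The map $d_p$ is the delicate one, since it is an outgoing differential: the cohomology at its target $(\E^{p+1,p}\oplus\E^{p,p+1})_\R$ is an off-diagonal group of odd total degree, not among those addressed by Theorem 5.3. I would instead pass to the transpose. The strong dual of $\E^{a,b}(X)$ is a space of compactly supported currents of complementary bidegree, and these form reflexive spaces (of type FS and DFS); by the closed range theorem, $\operatorname{im}d_p$ is closed if and only if $\operatorname{im}(d_p)^t$ is. A direct computation identifies $(d_p)^t$, up to sign, with $d$ on compactly supported currents, its image being exactly $\{\de\overline S+\db S:S\in\E_{p,p+1}'(X)\}$, the denominator of the compactly supported Aeppli group $H_{\de+\db}^{n-p,n-p}(X)_c$ (that is, the space $B^p_c(X)$). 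Thus $d_p$ is a topological homomorphism if and only if $H_{\de+\db}^{n-p,n-p}(X)_c$ is Hausdorff; by the Serre-type duality between Aeppli and Bott--Chern cohomology, this group is dual to the diagonal group $H_{\ddb}^{p,p}(X)$, whose Hausdorffness is again provided by Theorem 5.3.

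The principal obstacle lies in this final step for $d_p$: one must make the non-compact duality rigorous --- reflexivity of the spaces of forms and currents, the correct bidegree bookkeeping identifying $\operatorname{im}(d_p)^t$ with the compactly supported Aeppli denominator, and the transfer of the finiteness hypothesis $\dim H^j(X,\O^r)<\infty$ ($j\geq 1$) to the compactly supported cohomology so that duality preserves Hausdorffness. A self-contained alternative, bypassing duality, is to rerun the sheaf-theoretic argument behind Theorem 5.3 (the exact sequences of sheaves used in the proof of Theorem 5.2) one step higher, proving directly that the off-diagonal cohomology at $(\E^{p+1,p}\oplus\E^{p,p+1})_\R$ is Hausdorff; this is, in essence, the route of Corollary 2.5 in \cite{ABL}.
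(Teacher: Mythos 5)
Your treatment of $\ddb_p$ is correct, and it is the easy half: the image of $\ddb_p$ is the denominator of $H_{\ddb}^{p,p}(X)$, it sits inside the closed subspace $\ker d_p$, so Hausdorffness of $H_{\ddb}^{p,p}(X)$ (Theorem 5.3 with $s=1$) forces the image to be closed, and the open mapping theorem for Fr\'echet spaces upgrades closed range to topological homomorphism. Note also that the paper itself contains no proof of this Proposition: it is imported verbatim from \cite{ABL}, and the proof there is the sheaf-theoretic one you mention only in your final sentence, not the duality route you actually follow.

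The $d_p$ half has a genuine gap, exactly at the step you flag. Hausdorffness of a middle cohomology $\ker v/\mathrm{im}\,u$ controls only the \emph{incoming} differential $u$; Theorem 5.3 therefore gives no information about $d_p$, and the transpose manoeuvre does not manufacture any. Your use of the closed range theorem correctly reduces the problem to the (weak-$*$) closedness of $\mathrm{im}\,(d_p)^t=\{\de\overline S+\db S : S\in{\E}_{p,p+1}'(X)\}$, i.e.\ to the Hausdorffness of the compactly supported Aeppli group; but the \lq\lq Serre-type duality\rq\rq\ you then invoke is circular, because on a non-compact manifold the Aeppli/Bott--Chern pairing induces an isomorphism of $H_{\de+\db,c}$ with the dual of $H_{\ddb}^{p,p}(X)$ only \emph{after} one knows the relevant differentials have closed range. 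Unconditionally one gets only a map $H_{\de+\db,c}\to \bigl(H_{\ddb}^{p,p}(X)\bigr)'$ whose kernel is precisely $\overline{\mathrm{im}\,(d_p)^t}/\mathrm{im}\,(d_p)^t$: by the bipolar theorem $(\ker d_p)^\perp$ equals the weak-$*$ closure of $\mathrm{im}\,(d_p)^t$, so a compactly supported $i\ddb$-closed current annihilating all $d$-closed $(p,p)$-forms lies a priori only in that closure --- and closedness is the very thing to be proved. What the hypothesis $\dim H^j(X,\O^r)<\infty$ for \emph{all} $r\geq 0$ is actually there for is closed range at the odd-degree, off-diagonal spot: one must show that $\ker\sigma_{2p+1}/\mathrm{im}\,d_p$ is Hausdorff, where $\sigma_{2p+1}(\varphi,\overline\varphi)=\db\varphi+\de\overline\varphi$ maps $(\E^{p+1,p}\oplus\E^{p,p+1})_{\R}$ to $\E^{p+1,p+1}_{\R}$, by running the fine resolutions involving $\O^r$, $\overline\O^r$ and the pluriharmonic-type sheaves (the $\sigma_j$-sequences displayed in section 5 of the paper) one step past where Theorem 5.3 stops. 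That is the content of Corollary 2.5 in \cite{ABL}; your closing sentence names this route but does not execute it, so the $d_p$ statement remains unproved as submitted.
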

\medskip

So we got:

\begin{thm} {\rm (see Theorem 3.2 in \cite{ABL})} Let $X$ be a complex manifold, $K$ a compact subset of $X$; let $1 \leq p \leq n-1$ and suppose $d_p := d : \E^{p,p}_{\R}(X) \to (\E^{p+1,p} \oplus \E^{p,p+1})_{\R}(X)$ is a topological homomorphism. Then:

there is no current $T \neq 0$, $T \in P^p_c(X) \cap B^p_c(X),\  supp T \subseteq K \ \iff $ there is a real closed $(p,p)-$form $\Omega$ on $M$ such that $\Omega >0$ on $K$.
\end{thm}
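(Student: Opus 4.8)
The plan is to prove the equivalence by a Hahn--Banach separation argument carried out in the Fr\'echet space $\E^{p,p}(X)_\R$ of smooth real $(p,p)$-forms, whose topological dual is exactly the space $\E'_{p,p}(X)_\R$ of compactly supported currents of bidimension $(p,p)$. The two objects I want to separate are the closed subspace $Z := \ker d_p$ of $d$-closed forms and the open convex cone $G_K := \{\Omega \in \E^{p,p}(X)_\R : \Omega_x > 0 \ \text{for every } x \in K\}$ of forms that are strictly positive at each point of the compact set $K$; since $K$ is compact, fibrewise positivity there is an open condition, so $G_K$ is open, and it is convex because the fibrewise positivity cone is convex. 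It is also nonempty (it contains, e.g., the $p$-th power of the fundamental form of any Hermitian metric). The statement ``there is a real closed $(p,p)$-form $\Omega$ with $\Omega > 0$ on $K$'' is then precisely the assertion $G_K \cap Z \neq \emptyset$.

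For the easy direction $(\Leftarrow)$, suppose such an $\Omega \in G_K \cap Z$ exists and, for contradiction, let $0 \neq T \in P^p_c(X) \cap B^p_c(X)$ with $\mathrm{supp}\, T \subseteq K$. Since $T$ is positive, nonzero and supported in $K$, while $\Omega$ is strictly positive on $K$, the positivity duality of Definition 2.2 gives $(T,\Omega) > 0$. On the other hand $T \in B^p_c$ means $T = \de \overline S + \db S$ for a compactly supported current $S$, so, transposing, $(T,\Omega) = \pm(\overline S, \de \Omega) \pm (S, \db \Omega) = 0$ because $d\Omega = 0$ forces $\de \Omega = \db \Omega = 0$. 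This contradiction rules out such a $T$.

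For the hard direction $(\Rightarrow)$, assume no such current exists; I must produce $\Omega \in G_K \cap Z$. If instead $G_K \cap Z = \emptyset$, then, $G_K$ being open and convex and $Z$ a closed subspace, Hahn--Banach yields a nonzero continuous functional $L$ on $\E^{p,p}(X)_\R$ with $L|_Z = 0$ and $L \geq 0$ on $G_K$ (the separating constant must vanish, since $Z$ is a subspace and $G_K$ a cone). Representing $L$ by a compactly supported current $T$ of bidimension $(p,p)$, I read off from $L \geq 0$ on $G_K$ two facts: first, $T$ annihilates every form supported in $X \setminus K$ (add such a form, scaled by an arbitrary real $t$, to a fixed element of $G_K$), so $\mathrm{supp}\, T \subseteq K$; second, $(T,\Omega) \geq 0$ for every positive $\Omega$, i.e. $T \in P^p_c(X)$. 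From $L|_Z = 0$ I get $T \in (\ker d_p)^\perp$.

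Here the hypothesis enters decisively: because $d_p$ is a topological homomorphism, its transpose has closed range, so $(\ker d_p)^\perp$ equals the image of $d_p^{\,t}$ \emph{exactly}, with no closure, and that image is precisely $\{\de \overline S + \db S : S \in \E'_{p,p+1}(X)\} = B^p_c(X)$. Hence $T \in P^p_c(X) \cap B^p_c(X)$ is a nonzero current supported in $K$, contradicting the hypothesis; therefore $G_K \cap Z \neq \emptyset$, which furnishes the desired closed positive form. I expect the main obstacle to be exactly this last identification $(\ker d_p)^\perp = \mathrm{Im}(d_p^{\,t})$: in the non-compact setting the orthogonal of the kernel is a priori only the \emph{closure} of the image of the transpose, so that separation would produce only a current that is a limit of exact ones; equality, hence an honestly \emph{exact} compactly supported current, is guaranteed solely by the closed-range property, i.e. by the topological-homomorphism hypothesis supplied by Proposition 6.3 through the finiteness of the $H^j(X,\O^r)$. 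A secondary point requiring care is the bookkeeping of bidegrees and the matching of the positivity cones of forms and of currents, so that the separating functional indeed lands in the cone $P^p_c(X)$ named in the statement and the pairing in the easy direction is strictly positive.
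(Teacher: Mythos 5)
Your proof is correct, and it follows essentially the same route as the source: the paper itself only quotes this theorem from \cite{ABL}, where the proof is exactly the duality argument you give --- Hahn--Banach separation in $\E^{p,p}(X)_\R$ between the open convex cone of forms strictly positive on $K$ and the closed subspace $\ker d_p$, with the topological-homomorphism hypothesis invoked precisely to upgrade $(\ker d_p)^{\perp}$ from the closure of ${\rm Im}(d_p^{\,t})$ to ${\rm Im}(d_p^{\,t}) = B^p_c(X)$ itself. Your two flagged caveats (strict positivity of the pairing $(T,\Omega)>0$, which rests on positive currents having order zero and measure coefficients so that $T(\Omega)\geq \varepsilon\, T(\omega^p)>0$ for small $\varepsilon$, and the matching of positivity cones of forms and currents) are exactly the points handled in \cite{ABL}, and your treatment of them is adequate.
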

\medskip
Now we can prove Theorem 6.2.

\begin{proof} Fix a neighborhood $U$ of $E$ in $\tilde M$, $U \subset \subset \tilde M$, and let $T$ be a {\it bad} current, i.e. $T \in P^p_c(\tilde M) \cap B^p_c(\tilde M),\ $  supp $T \subseteq K:= \overline U$. Thus $f_*T$ is a {\it bad} current on $M$ supported in $f(K)$, which is a compact neighborhood of $Y$: since $M$ is locally $(n-1)-$K\"ahler with respect to $Y$, we get $f_*T=0$, so that supp $T \subseteq E$.

By the Support Theorem 2.5, $T$ is closed (in fact, $T= \sum c_j [E_j], \ c_j \geq 0$), and moreover $T = \de  \overline S + \db S$ for some compactly supported  $(1,0)-$current $S$, so that we get $\ddb S =0$.
Consider $\de S$: it is a $\db -$closed $(2,0)-$current, hence it is a holomorphic 2-form with compact support: therefore,  $\de S=0$ and $T= d(S + \overline S)$ is $d-$exact. But no $(n-1)-$dimensional component of $E$ is null-homologous, by the structure of the homology of $\tilde M$: hence $T= \sum c_j [E_j] =0$. 
\end{proof}
\medskip

The other \pkk cases are not known. 

Notice that when $\sigma_{2p} := i \ddb : \E^{p,p}_{\R} \to \E^{p+1,p+1}_{\R}$ is a topological homomorphism (which is true in our hypothesis by Corollary 2.5 in \cite{ABL}, see above), then we have a similar characterization Theorem (see \cite{ABL}, Remark 3.4):
\medskip

\begin{prop} Let $X$ be a complex manifold, $K$ a compact subset of $X$; let $1 \leq p \leq n-1$ and suppose $\sigma_{2p} := i \ddb : \E^{p,p}_{\R} \to \E^{p+1,p+1}_{\R}$ is a topological homomorphism.Then:

there is no current $T \neq 0$, $T \in P^p_c(X) \cap (Im \sigma_{2p})_c ,\  supp T \subseteq K \ \iff $ there is a real  $(p,p)-$form $\Omega$ on $M$ such that $i \ddb \Omega=0$ and $\Omega >0$ on $K$.
\end{prop}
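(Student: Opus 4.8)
The plan is to obtain Proposition 6.5 by repeating the proof of Theorem 6.4 (Theorem 3.2 in \cite{ABL}) with the first-order operator $d_p$ replaced throughout by $\sigma_{2p} = i\ddb$; indeed the statement is, in both cases, a Hahn--Banach separation between the convex cone $P^p_c(X)$ of compactly supported positive $(p,p)$-currents carried by $K$ and the vector subspace $(Im\,\sigma_{2p})_c$ of compactly supported $i\ddb$-exact currents. The only place where the hypothesis that $\sigma_{2p}$ is a topological homomorphism is used is to guarantee that $(Im\,\sigma_{2p})_c$ is closed in the space ${\E}_{p,p}'(X)_{\R}$ of compactly supported currents, so that the separation theorem applies; this is precisely the content of Remark 3.4 in \cite{ABL}.

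First I would dispose of the implication $(\Leftarrow)$. Assume a real $(p,p)$-form $\Omega$ with $i\ddb\Omega = 0$ and $\Omega > 0$ on $K$, and suppose some $T \neq 0$ lies in $P^p_c(X) \cap (Im\,\sigma_{2p})_c$ with $supp\,T \subseteq K$. Writing $T = i\ddb A$ for a compactly supported current $A$ of bidimension $(p+1,p+1)$ and integrating by parts gives $(T,\Omega) = (A, i\ddb\Omega) = 0$. On the other hand, the defining duality between the cone $P^p_c$ and the transverse forms forces $(T,\Omega) > 0$, because $T$ is a nonzero positive current supported in the compact set $K$ on which $\Omega$ is transverse. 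This contradiction rules out the bad current.

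For the implication $(\Rightarrow)$ I would run the separation argument. Fix a reference transverse $(p,p)$-form $\omega_0$ (say $\omega_0 = \beta^p$ for a hermitian form $\beta$) and let $C \subseteq {\E}_{p,p}'(X)_{\R}$ be the cone of positive currents supported in $K$. The slice $B := \{ T \in C : (T,\omega_0) = 1\}$ is a compact convex base of $C$, since positivity together with support in the fixed compact set $K$ bounds the mass of $T$ by $(T,\omega_0)$. The hypothesis ``no bad current'' says exactly that $B$ is disjoint from $L := (Im\,\sigma_{2p})_c$, which is a closed subspace by the topological-homomorphism assumption. Separating the compact convex set $B$ from the closed subspace $L$, and using the duality between ${\E}_{p,p}'(X)_{\R}$ and ${\E}^{p,p}(X)_{\R}$ to represent the separating functional by a smooth form $\Omega \in {\E}^{p,p}(X)_{\R}$, we get $(T,\Omega) = 0$ for all $T \in L$ and $(T,\Omega) > 0$ for all $T \in B$. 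The first condition reads $(A, i\ddb\Omega) = 0$ for every compactly supported $A$, hence $i\ddb\Omega = 0$; testing the second against the point currents $\delta_x \otimes (v \wedge \overline v)$, $x \in K$, and invoking the pointwise duality of Remark f) in Section 2, gives $\Omega > 0$ on $K$. Thus $\Omega$ is the form sought.

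The step I expect to cost the most is the closedness of $L = (Im\,\sigma_{2p})_c$: this is exactly what the topological-homomorphism hypothesis delivers, through the closed-range theorem applied to the transpose ${}^t\sigma_{2p}$, which sends $(p+1,p+1)$-bidimensional currents to $(p,p)$-bidimensional currents by $i\ddb$. Everything else is formal once one checks that the bidegree bookkeeping for $i\ddb$ matches that for $d$ in Theorem 6.4 --- in particular that the kernel of $\sigma_{2p}$ on forms and the annihilator of the image of ${}^t\sigma_{2p}$ on currents are identified by the pairing, exactly as the kernel and image of $d_p$ were in the proof of Theorem 6.4.
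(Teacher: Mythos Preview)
Your proposal is correct and follows exactly the route the paper itself indicates: the paper does not spell out a proof of Proposition~6.5 but simply refers to \cite{ABL}, Remark~3.4, which says to rerun the Hahn--Banach separation argument of Theorem~3.2 in \cite{ABL} (= Theorem~6.4 here) with $d_p$ replaced by $\sigma_{2p}=i\ddb$, the topological-homomorphism hypothesis guaranteeing closedness of $(Im\,\sigma_{2p})_c$. Your write-up makes this explicit, with both implications carried out as expected.
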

\medskip

But when $p=n-1$, $T \in P^p_c(X) \cap (Im \sigma_{2p})_c$ means that $0 < T= i \ddb g$, with $g$ a plurisubharmonic function with compact support: so $g$ is a constant, and $T=0$. This means that every $n-$dimensional complex manifold is $(n-1)$PL with respect to its compact subsets (as expected).
\bigskip

\section{Examples and remarks}

{\bf Examples.} 

{\bf 7.1} Hironaka's manifold $X$ (see \cite{Ha}, p. 444 or \cite {AB3}) is given by a modification $f : X \to \P_3$, where the center $Y$ is a plane curve with a node. It is a Moishezon manifold, containing a null-homologous curve. Thus it is not \kk. $X$ is a balanced manifold (see \cite{AB3}) so that it is \nkk. 

We can also consider a modification given as follows:  take $\pi_O$, the blow-up of $\P_3$ at a point $O$, and then perform a modification $f$ like that of Hironaka, where the center $Y$ of $f$ lies on the exceptional set of $\pi_O$. Here, the exceptional set $O$ is a point, but the resulting compact threefold is not \kk.

 \medskip

{\bf 7.2} In \cite{AB4} we build an example to show that, even in the case of modifications, we can sometime pull-back  \pkk properties for $p>1$. Indeed, we consider  a smooth modification $\tilde X$ of $\P_5$, where the center $Y$ is a surface with a singularity; the singular fibre has two irreducible components, one of which is biholomorphic to $\P_2$ and the other is a holomorphic fibre bundle over $\P_1$ with $\P_2$ as fibre. We show that $\tilde X$ is not K\"ahler, because it contains a copy of Hironaka's manifold, but it is \pkk for every $p > 1$.
\medskip

{\bf 7.3} On the contrary, we give here an example that shows that we cannot {\it always} pull-back \pkk properties by blowing up, even in the compact case. We use the class of manifolds we constructed in \cite{AB1}, namely, the compact nilmanifolds $\eta \beta _{2n+1}, \ n \geq 1$: let us recall the definition.

Let $G$ be the following subgroup of $GL(n+2, \C)$:
$$G := \{ A \in GL(n+2, \C) / A=\left(\begin{array}{ccc} 1 & X & z \\ 0 & I_n & Y \\ 0 & 0 & 1 \end{array} \right)
,  z \in \C, X, Y \in \C^n \},$$
and let $\Gamma$ be the subgroup of $G$ given by matrices with entries in $\Z[i]$. $\Gamma$ is a discrete subgroup and the homogeneous manifold 
$\eta \beta _{2n+1} := G/ \Gamma$ becomes a holomorphically parallelizable compact connected complex manifold (in particular, a nilmanifold) of dimensione $2n+1$ (for $n=1$, $\eta \beta _{3}$ is nothing but the Iwasawa manifold $I_3$). The standard basis for holomorphic 1-forms on $\eta \beta _{2n+1}$ is $\{ \varphi_1, \dots, \varphi_{2n+1} \}$; the  $ \varphi_j$ are all closed, except $d \varphi_{2n+1}  = \varphi_{1}  \wedge \varphi_{2}  + \dots + \varphi_{2n-1}  \wedge \varphi_{2n} .$

Recall the following results:

\begin{thm} {\rm (see Theorem 3.2 and Theorem 4.2 in \cite{AB1})}  (1) For $\eta \beta _{2n+1}$, as for all holomorphically parallelizable manifolds, 
for a fixed $p$, all \pkk conditions are equivalent.

(2) The manifold $\eta \beta _{2n+1}$ is not pK for $1 \leq p \leq n$ and is pK for $n+1 \leq p \leq 2n$.
\end{thm}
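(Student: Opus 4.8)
The plan is to reduce everything to $G$-invariant forms and then to linear algebra on $\C^{2n}$ governed by the single holomorphic $2$-form $\beta := \sum_{k=1}^{n}\varphi_{2k-1}\wedge\varphi_{2k}$, which is the only source of non-closedness, since $d\varphi_j=0$ for $j\le 2n$ and $d\varphi_{2n+1}=\beta$. As $\eta\beta_{2n+1}=G/\Gamma$ is a compact nilmanifold carrying the global invariant coframe $\{\varphi_j\}$, I would first symmetrize: averaging a form over $G/\Gamma$ via $\mu(\Phi):=\int_{G/\Gamma}L_g^*\Phi\,dg$ produces a $G$-invariant form, and $\mu$ commutes with $\de,\db,d$, preserves reality, and (the positivity cones being convex and $L_g$-invariant) sends transverse forms to transverse invariant forms. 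Hence each of $pK,pWK,pS,pPL$ holds iff it holds for a $G$-invariant \pkk form; this is the general mechanism behind (1). Writing $\theta:=\varphi_{2n+1}$ and $B_0:=\langle\varphi_1,\dots,\varphi_{2n}\rangle$, every invariant real $(p,p)$-form decomposes as
$$\Omega = A + \theta\wedge B + \overline\theta\wedge C + \theta\wedge\overline\theta\wedge D,$$
with $A,B,C,D$ constant-coefficient forms in the horizontal variables $B_0,\overline{B_0}$.

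Using $d\theta=\beta$, $d\overline\theta=\overline\beta$ and $dA=dB=dC=dD=0$, a direct computation shows that $\Omega$ is closed iff $\beta\wedge B=\overline\beta\wedge C=\beta\wedge D=\overline\beta\wedge D=0$, whereas $\ddb\Omega$ is a nonzero multiple of $\beta\wedge\overline\beta\wedge D$, so $\Omega$ is $pPL$ iff $\beta\wedge\overline\beta\wedge D=0$ (the $pWK$ and $pS$ conditions lie in between). Moreover, testing transversality of $\Omega$ against the horizontal simple $(2n+1-p,0)$-forms (those not involving $\theta$), only the summand $\theta\wedge\overline\theta\wedge D$ survives, so $D$ must be a transverse --- in particular nonzero --- $(p-1,p-1)$-form on $B_0\cong\C^{2n}$. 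Both parts of the theorem are thereby reduced to the holomorphic symplectic form $\beta$ on $\C^{2n}$.

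For the obstruction ($1\le p\le n$) I would apply hard Lefschetz for the nondegenerate $\beta$: the map $L_\beta:=\beta\wedge\cdot$ is injective on $\Lambda^{r,0}(\C^{2n})$ for $r\le n-1$. When $p\le n$ we have $p-1\le n-1$, so $L_\beta$ and $\overline{L_\beta}$ are injective on the holomorphic and antiholomorphic factors of a $(p-1,p-1)$-form; hence $\beta\wedge\overline\beta\wedge D=0$ forces $D=0$, contradicting transversality. Thus even the weakest property $pPL$ fails, and by the chain $pK\Rightarrow pWK\Rightarrow pS\Rightarrow pPL$ none of them holds for $p\le n$.

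For $n+1\le p\le 2n$, now $p-1\ge n$, so $L_\beta$ has a nontrivial kernel on $\Lambda^{p-1,0}$; letting $\{\eta_a\}$ span the $\beta$-primitive $(p-1,0)$-forms, set $D:=\sigma_{p-1}\sum_a\eta_a\wedge\overline{\eta_a}$, which is real, strongly positive, and satisfies $\beta\wedge D=\overline\beta\wedge D=0$. With $A:=\omega_0^{\,p}$ for the flat metric $\omega_0:=\frac{i}{2}\sum_{j=1}^{2n}\varphi_j\wedge\overline{\varphi_j}$ and $B=C=0$, the form $\Omega:=A+\theta\wedge\overline\theta\wedge D$ is closed; and a simple test $(2n+1-p,0)$-form either contains $\theta$ as a factor (strict positivity coming from $A$) or is horizontal (strict positivity coming from $D$), so $\Omega$ is transverse and $\eta\beta_{2n+1}$ is $pK$. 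Combining the two directions, all four properties hold exactly when $p\ge n+1$, which yields both their equivalence (1) and the stated ranges (2). I expect the one genuinely non-formal point to be the transversality of $D$: one must check that the $\beta$-primitive $(p-1,0)$-forms share no common linear divisor of degree $2n+1-p$, so that $D$ lands in the interior of the weak-positivity cone. The symmetrization and the Lefschetz obstruction, by contrast, are essentially formal.
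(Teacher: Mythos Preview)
The paper gives no proof of this statement; it is quoted verbatim from Theorems~3.2 and~4.2 of \cite{AB1}. So there is nothing internal to compare against, and your outline should be judged on its own. The overall architecture --- average to $G$-invariant forms, decompose with respect to $\theta=\varphi_{2n+1}$, and reduce to the linear algebra of the holomorphic symplectic form $\beta$ on $\C^{2n}$ --- is exactly right, and your obstruction for $1\le p\le n$ via injectivity of $L_\beta$ on $\Lambda^{p-1,0}$ is clean and correct.

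Two points deserve more care. First, the dichotomy ``a simple $(2n{+}1{-}p,0)$-form either contains $\theta$ as a factor or is horizontal'' is not literally true: after a basis change one only gets $\psi=(\theta+h)\wedge\psi''$ with $h,\psi''$ horizontal, and you must check that the cross term $\theta\wedge\overline\theta\wedge D\wedge (h\wedge\psi'')\wedge\overline{(h\wedge\psi'')}$ is nonnegative (it is, since $D$ is strongly positive) so that the $A=\omega_0^{\,p}$ term still forces strict positivity. Second --- and this is the real gap you yourself flag --- the transversality of $D=\sigma_{p-1}\sum_a\eta_a\wedge\overline{\eta_a}$ is the crux of the construction and ``one must check'' is not a proof. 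Unwinding the duality, what you need is: for $q:=2n-p+1\le n$, no nonzero \emph{simple} $q$-form on $\C^{2n}$ lies in $\beta\wedge\Lambda^{q-2,0}$, equivalently every simple $q$-form has nonzero $\beta$-primitive part. This is true but not formal; one route is: if the support $W\subset(\C^{2n})^*$ of the simple form has $W^\circ\subset\C^{2n}$ Lagrangian (only possible when $q=n$) then $\psi_W$ is itself $\beta$-primitive, while if $W^\circ$ is not isotropic one picks a symplectic pair $v,w\in W^\circ$, splits off their plane, and reduces to a $(2n{-}2)$-dimensional symplectic space. Without this lemma your construction for $n{+}1\le p\le 2n$ is incomplete. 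Finally, your sentence ``this is the general mechanism behind~(1)'' overstates matters: averaging reduces each of the four properties to its invariant version, but does not by itself show the four invariant conditions coincide; for $\eta\beta_{2n+1}$ the equivalence comes a~posteriori from~(2), and the general holomorphically parallelizable case (the parenthetical in~(1)) is a separate statement you have not addressed.
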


\medskip
To build our example, let us consider $M = \eta \beta _{7}, Y = \eta \beta _{3} = I_3$ as a submanifold of $M$ (in an obvious way, see f.i. (4.4) in \cite{AB1}). In particular, $M$ is 4K, Y is 2K but not K\"ahler. Consider $\pi : \tilde M \to M$, the blow-up of $M$ along $Y$; if $\tilde M$ were 4K too, then also the exceptional set $E$ would be 4K, but by definition $\pi$ induces a holomorphic submersion  from $E$ to $Y$ with 3-dimensional fibres. Thus $Y$ would be K\"ahler, by using the push-forward of a 4K form on $E$ (see Proposition 3.1 in \cite{A3}).
\medskip

{\bf 7.4} Taking into account these examples, let us collect what we got until now for the case of a modification of a {\it compact} \kk manifold $M$: 

a) $\tilde M$ is obviously $(n-1)$PL.

b) If $M$ is K\"ahler (i.e. 1K), then it is regular (in the sense of Varouchas, that is, it satisfies the $\ddb-$Lemma, see \cite{V3}, \cite{DGMS}), so that also $\tilde M$ is regular, which implies that it is $(n-1)$WK and $(n-1)$S by the following result stated in \cite{A1}: {\it On a regular manifold, $\forall \ p$,  $pWK = pS = pPL$. Thus, every regular manifold is $(n-1)WK $, since $(n-1)WK = (n-1)PL$.}

As stated in Theorem 4.1, $\tilde M$ is also $(n-1)$K (a direct proof was given in \cite{AB4}). Nevertheless, $\tilde M$ may not be \kk, as examples in  7.1 show, also when the center is only a point.
But example 7.2 shows that $\tilde M$ can be \pkk for every $p>1$.

c) If $M$ is \kk, then in case K and S it is also \nkk, so that $\tilde M$ is $(n-1)$K or, respectively, $(n-1)$S (see f.i. \cite{A3}). We don't know in general if,
when 
$M$ is 1WK, then  $\tilde M$ is $(n-1)$WK; this is true for a wide class of manifolds, for instance when $H^{2,0}(M) =0$, because in this case $(n-1)$WK = $(n-1)$S (see \cite{A1}).
\bigskip

{\bf 7.5} We recall here an example proposed by Yachou \cite{Y}, which illustrates the following result (see \cite{GR1}, pp. 506-507): If $G$ is a complex connected semisimple Lie group, it has a discrete subgroup $\Gamma$ such that the homogeneous manifold $M := G/\Gamma$ is compact, holomorphically parallelizable and has no hypersurfaces (since $a(M) = 0$).

Take $G=SL(2,\C)$, and let us consider the holomorphic $1-$forms $\eta, \alpha, \beta$ on $M := G/ \Gamma$ induced by the standard basis for $\g^*$: it holds
$$ d \alpha = -2 \eta \wedge \alpha, \ \ d \beta = 2 \eta \wedge \beta, \ \ d \eta = \alpha \wedge \beta.$$
The standard fundamental  form, given by $\omega = \frac{i}{2} (\alpha \wedge \overline \alpha + \beta \wedge \overline \beta +\eta \wedge \overline \eta )$, satisfies $d \omega^2=0$, so that $\omega^2$ is a balanced form: but it is exact, since
$$\omega^2 = d(\frac{1}{16} \alpha \wedge d \overline  \alpha + \frac{1}{16} \beta \wedge d \overline  \beta +\frac{1}{4} \eta \wedge d \overline \eta ).$$

\bigskip
{\bf 7.6}  Let us end with a particular question, related to example 7.5, i.e. the fact that, on compact K\"ahler manifolds, $\int_M \omega^n = $ vol $M > 0$, so that $\omega$ is not \lq\lq exact\rq\rq, while a 2K form can be exact, as seen in 7.5 (see also the Introduction of \cite{FX}). Notice that, when $M$ is not compact, a \pkk form can be exact: this is the case, for instance, of $p-$complete manifolds (see \cite{ABL}, Proposition 4.4). \medskip

Suppose $M, \tilde M$ are complex manifolds and $f: \tilde M \to M$ is a proper modification with compact center $Y$; suppose moreover that $\tilde M$ is \pkk: can the class of its \pkk form $\tilde \Omega$ vanish, in one of the following  cohomology groups: $H_{\ddb}^{p,p}(\tilde M),$ $H_{\de + \db}^{p,p}(\tilde M),$ $H_{dR} ^{p,p}(\tilde M) ?$
\medskip

In most cases, the answer is no:  suppose ${\rm dim} Y =s, 0 \leq s \leq n-2$: the cases $p=n-1$ and $p=n-1-s$ are completely solved by the existence of compact analytic subvarieties of the right dimension in $\tilde M$, namely, the maximal irreducible components of $E$ and the generic fibre $f^{-1}(y)$. These are closed currents, which vanish when applied to an \lq\lq exact\rq\rq  form, but they must be positive when applied to transverse forms, since they have positive volume. For the same reason, the answer is the same, for every $p$, on blow-ups with center at a point $O$: they have enough compact subvarieties on $E$.
\medskip

In some other cases, when $M$ and $\tilde M$ are compact, we can use the pull-back of a suitable $p-$K\"ahler form on $M$: in particular, this holds when $M$ is balanced, as follows:

\begin{prop} Let $f: \tilde M \to M$ be a modification, $M$ a compact balanced manifold with form $\omega$, $\tilde M$ a compact \kk manifold with \kk form $\tilde \omega$. Then $\tilde \omega$ is never exact.
\end{prop}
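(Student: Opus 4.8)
The plan is to test $\tilde\omega$ against the pulled-back balanced form and then to contrast the strict positivity of the resulting integral with the vanishing forced by exactness. First I would note that, $M$ being balanced, $\omega^{n-1}$ is a $d$-closed strongly positive $(n-1,n-1)$-form on $M$; hence $\Phi := f^*(\omega^{n-1})$ is a $d$-closed (so also $\de$-, $\db$- and $i\ddb$-closed) positive $(n-1,n-1)$-form on $\tilde M$, and it is strictly positive on $\tilde M - E$ because $f$ restricts to a biholomorphism there. Since $\tilde\omega$ is a \kk form, it is a transverse, hence strictly positive, $(1,1)$-form on all of $\tilde M$. Therefore $\tilde\omega\wedge\Phi$ is a positive $(n,n)$-form, strictly positive off the measure-zero set $E$, and I would conclude
\[ I := \int_{\tilde M}\tilde\omega\wedge f^*(\omega^{n-1}) > 0. \]

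Next I would show that $I=0$ whenever $\tilde\omega$ is \lq\lq exact\rq\rq, handling the three cohomology groups simultaneously; the crucial point is that the single test form $\Phi$ is closed in exactly the sense dual to each notion of exactness. In the de Rham case $\tilde\omega = d\zeta$, Stokes' theorem together with $d\Phi=0$ gives $I=\int_{\tilde M}d(\zeta\wedge\Phi)=0$. In the Bott-Chern case $\tilde\omega = i\ddb\psi$, one integration by parts together with $\de\Phi=0$ gives $I=\int_{\tilde M}i\,\de\db\psi\wedge\Phi=0$. In the Aeppli case $\tilde\omega = \de\overline\eta+\db\eta$, integrating by parts and using $\de\Phi=\db\Phi=0$ yields $I=0$ as well. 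Comparing with the previous paragraph produces a contradiction, so the class of $\tilde\omega$ vanishes in none of $H_{\ddb}^{1,1}(\tilde M)$, $H_{\de+\db}^{1,1}(\tilde M)$, $H_{dR}^{1,1}(\tilde M)$.

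The individual steps are routine; the one point deserving care is the positivity of the pairing $I$, namely that $\tilde\omega\wedge f^*(\omega^{n-1})$ is genuinely a positive top form that does not collapse on the exceptional set. This rests on two facts from Section 2: for $p=1$ and $p=n-1$ the positivity cones coincide (Remark d), so that $\tilde\omega$ is positive and $\omega^{n-1}$ strongly positive; and the wedge of a strictly positive $(1,1)$-form with a transverse $(n-1,n-1)$-form is strictly positive (duality, Remark f). Off $E$ the map $f$ is a local biholomorphism, so $f^*\omega$ is again a hermitian form and $\tilde\omega\wedge(f^*\omega)^{n-1}>0$ there, which suffices since $E$ is negligible. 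The genuinely conceptual ingredient, and what makes the uniform argument possible, is that the balanced hypothesis on $M$ is precisely what guarantees that $f^*(\omega^{n-1})$ is at once $d$-, $\de$-, $\db$- and $i\ddb$-closed, so that one closed positive test form defeats exactness in all three cohomologies simultaneously.
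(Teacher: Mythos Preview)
Your argument is correct and follows the same route as the paper: pair $\tilde\omega$ with the $d$-closed positive form $f^*\omega^{n-1}$, observe that the integral is strictly positive because $f$ is biholomorphic off the thin set $E$, and then derive $I=0$ from each form of exactness via Stokes. The only cosmetic difference is that the paper disposes of the Bott--Chern case by the maximum principle (if $\tilde\omega=i\ddb g>0$ on a compact manifold then $g$ is constant), whereas you fold it into the same integration-by-parts scheme; both are valid, and your treatment is slightly more uniform.
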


{\it Proof.} Case in $H_{\ddb}^{p,p}(\tilde M)$ is obvious, because $\tilde \omega = i \ddb g > 0$ implies $g$ is constant.

In the case 1S, if $\tilde \omega= \psi^{1,1}$ with $d \psi =0$, we ask for the possibility $\psi = d \alpha$. Notice that $f^*\omega^{n-1} \geq 0$ and 
$f^*\omega^{n-1} > 0$ outside $E$; thus we get
$$0 < \int_{\tilde M} \tilde \omega \wedge f^*\omega^{n-1} = \int_{\tilde M} \psi \wedge f^*\omega^{n-1} = \int_{\tilde M} d \alpha \wedge f^*\omega^{n-1} =
 - \int_{\tilde M}  \alpha \wedge d (f^*\omega^{n-1}),$$
 which vanishes since $M$ is balanced.
 
 In case 1WK and 1PL, starting by $\de \tilde \omega= \ddb \alpha$ or $\ddb \tilde \omega= 0$, we ask for the possibility $\tilde \omega= \de \overline \mu + \db \mu$; this can be solved as above.
 
 In case 1K, starting by $d \tilde \omega= 0$, we can ask if $\tilde \omega= d \beta$. As above, the answer is negative, also when $M$ is only $(n-1)$WK.
 \medskip
 
 {\bf Claim.}  Arguing as in the previous Proposition, if $M$ is compact K\"ahler and $\tilde M$ is \pkk, its form cannot be exact.

\bigskip

\bigskip
\bigskip

\end{document}